\newtheoremstyle{myremark}     {10pt}{10pt}{}{}{\bfseries}{.}{.5em}{}
\newtheorem{thm}{Theorem}[section]
\newtheorem{cor}[thm]{Corollary}
\newtheorem{lem}[thm]{Lemma}
\newtheorem{prop}[thm]{Proposition}
\theoremstyle{definition}
\newtheorem{defn}[thm]{Definition}
\theoremstyle{myremark}
\newtheorem{rem}[thm]{Remark}
\numberwithin{equation}{section}
\DeclareMathOperator{\RE}{Re}
\DeclareMathOperator{\IM}{Im}
\newcommand{\h}{\mathcal{H}}
\newcommand{\s}{\mathcal{S}}
\newcommand{\D}{\mathcal{D}}
\newcommand{\C}{\mathbb{C}}
\newcommand{\Z}{\mathbb{Z}}
\newcommand{\N}{\mathbb{N}}
\newcommand{\CP}{\mathcal{P}}
\newcommand{\BH}{\mathcal{B}(\mathcal{H})}
\newcommand{\EssD}{\mathcal{D}}
\newcommand{\abs}[1]{\left\vert#1\right\vert}
\newcommand{\set}[1]{\left\{#1\right\}}
\newcommand{\seq}[1]{\left\lbrace #1 \right\rbrace}
\newcommand{\norm}[1]{\left\Vert#1\right\Vert}
\newcommand\inner[2]{\left\langle #1, #2 \right\rangle}
\begin{document}
	
	\title[Subsequential ergodic theorems]{ Weighted Subsequential ergodic theorems  on Orlicz spaces}
	
	\author[P. Bikram]{Panchugopal Bikram}
	\address{School of Mathematical Sciences,
		National Institute of Science Education and Research,  Bhubaneswar, An OCC of Homi Bhabha National Institute,  Jatni- 752050, India}
	\email{bikram@niser.ac.in}
	
	\author[D. Saha]{Diptesh Saha}
	\address{School of Mathematical Sciences,
		National Institute of Science Education and Research,  Bhubaneswar, An OCC of Homi Bhabha National Institute,  Jatni- 752050, India}
	\email{diptesh.saha@niser.ac.in}

	\keywords{maximal ergodic inequality, individual ergodic theorems, Besicovitch weights, vector valued weights, non-commutative Orlicz spaces}
		\subjclass[2010]{ Primary: 46L55, 47A35 ; 
				Secondary: 46L51,  46L52 }
	
	\date{\today}
	

	\begin{abstract}
			For a semifinite von Neumann algebra $M$, individual convergence of subsequential, $\mathcal{Z}(M)$ (center of $M$) valued weighted ergodic averages are studied in non commutative Orlicz spaces. In the process, we also derive a maximal ergodic inequality corresponding to such averages in noncommutative $L^p~ (1 \leq p < \infty)$ spaces using the weak $(1,1)$ inequality obtained by Yeadon.
	\end{abstract}
	
	\maketitle 
	
	\section{Introduction}
	
	The connection between ergodic theory and von Neumann algebra dates back to the very inception of theory of operator algebra. The study of pointwise ergodic theorems plays a center role in classical ergodic theory and has a very deep connection with statistical physics as well. However, the study of analogous ergodic theorems in the non commutative settings originated only in the pioneering work of Lance \cite{Lance1976} in 1976. After that the theory flourished and many authors extended the results of Lance to various directions. We refer here to \cite{Conze1978}, \cite{Kuemmerer1978}, \cite{Jajte:1985vd} and the references therein.
	
	Yeadon \cite{Yeadon1977} first studied the ergodic theorems in the predual of a semifinite von Neumann algebra. He proved a maximal ergodic theorem in noncommutative $L^1$ space, which still appears frequently in modern proofs of noncommutative ergodic theorems . The corresponding maximal ergodic theorem is extended to noncommutative $L^p~ (1 < p < \infty)$ space in the celebrated work \cite{Junge2007}. Also as a consequence the analogous individual ergodic theorems are proved in the same article.
	
	On the other hand an alternative approach solely based on Yeadon's weak $(1,1)$ inequality was opted by various authors to prove various individual ergodic theorems on non commutative $L^p$ spaces. In \cite{Litvinov:2012wk}, the author introduced the notion of noncommutative uniform continuity and bilateral uniform continuity in measure at zero and provided an alternative proof of the individual ergodic theorems from \cite{Junge2007}. Several attempts has been made since then to improve the results. One natural generalisation is towards the proof of subsequential ergodic theorems. In \cite{Litvinov:2001wu}, first attempt was made to prove an individual ergodic theorem along the so called uniform sequence in the von Neumann algebra setting. Simultaneously weighted ergodic theorems also became an interesting area of research. In \cite{chilin_few_2005}, the authors studied the convergence of standard ergodic averages for actions of free groups and also for the weighted averages. Several other related works are available in the literature. The reader may look into [\cite{Anantharaman:2010ts}, \cite{Hong:2020wq}, \cite{Hong:2021vm}, \cite{Panchugopal:2022ub}] and the references therein.
	
	Another extension of these results which has been studied extensively is in the realm of symmetric spaces, in particular, the Orlicz spaces. It is known that the class of Orlicz spaces is significantly wider than the class of $L^p$ spaces. The first account of study of individual ergodic theorems in the case of noncommutative Orlicz spaces is found in \cite{Chilin:2017uj}. In \cite{Chilin:2015wo}, ergodic theorems for weighted averages is studied in fully symmetric spaces.
	
	In this article we study various ergodic theorems associated with (vector valued) weighted ergodic averages along some special subsequences in noncommutative Orlicz spaces. Before this, ergodic averages with vector valued weights has been studied in \cite{comez_ergodic_2013}. Very recently, in \cite{OBrien:2021tw}, the author studied convergence of (scalar) weighted ergodic averages along subsequences in noncommutative $L^p~ (1 \leq p < \infty)$ spaces.
	
	Our aim in this article is to establish an individual ergodic theorem for positive Dunford- Schwartz operator (see Definition \ref{Dunford}) with von Neumann algebra valued Besicovitch weighted (see Definition \ref{M valued avg}) ergodic averages along subsequence of density one in Orlicz spaces (see Theorem \ref{main thm}). Our proof essentially based upon the notion of bilateral uniform continuity in measure for normed linear spaces. 
	
	Now we describe the layout of this article. In \S 2, we collect all the  materials which are essential for this article. In particular, we recall some basic facts about von Neumann algebras $M$ with faithful normal semifinite trace $\tau$ and space of $\tau$- measurable operators. We also discuss a few topologies on this space. After that, we recollect the definition of non commutative Orlicz spaces and some of its properties which are essential for this article. We also define Dunford Schwartz operators and bilaterally uniformly equicontinuity in measure (b.u.e.m.) at zero of sequences and end this section with the recollection of few important theorems regarding this. \S3 begins with the appropriate definition of subsequential weighted ergodic averages. Then we prove a suitable form of maximal ergodic inequality and use it to prove that sequence of averages under study is b.u.e.m. at zero, which essentially helps us to obtain a convergence result in $L^1 \cap M$. Finally our main result is achieved.

	\section{Preliminaries}
	
	Throughout this article we assume that $M$ is a semifinite von Neumann algebra with faithful, normal, semifinite (f.n.s.) trace $\tau$ represented on a separable Hilbert space $\h$. Let $\CP(M)$ (resp. $\CP_0(M)$) denotes the collection of all (resp. non-zero) projections  in the von Neumann algebra $M$. For each $e \in \CP(M)$ we assign $e^\perp$ for the projection $1-e$, where $1$ denotes the identity element of $M$.
	
	Let $\BH$ denotes the space of all bounded operators of the Hilbert space $\h$. A closed densely defined operator $x: \EssD_x \subseteq \h \to \h$ is called affiliated to a $M$ if $y'x \subseteq xy'$ for all $y' \in M'$, where $M'$ denotes the commutant of  $M$ which is a von Neumann algebra by its own right. Equivalently, one can define $x$ to be affiliated to $M$ if $u'x=xu'$ holds for all unitary $u'$ in $M'$. When $x$ is affiliated to $M$, it is denoted by $x \eta M$.  The center of the von Neumann algebra $M$ is defined by $M \cap M'$ and it is denoted by $\mathcal{Z}(M)$.
	
	Now we recall that for two positive, self-adjoint operators $x,y$ defined on $\h$, $x \leq y$ is defined as: $\D_y \subseteq \D_x$ and $\norm{x^{1/2}\xi}^2 \leq \norm{y^{1/2}\xi}^2$ for all $\xi \in \D_y$.
	
	\begin{prop}
		Let $x$ be a positive, self-adjoint operator affiliated to  $M$ and $z \in \mathcal{Z}(M)_+$ be such that $z \leq C$ for some constant $C>0$. Then $ 0 \leq zx \leq Cx$.
	\end{prop}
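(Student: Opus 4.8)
The plan is to reduce the claim to the quadratic-form characterization of the order relation recalled just above the statement, and then to read off both inequalities from a single spectral computation. Since $(Cx)^{1/2} = C^{1/2}x^{1/2}$, we have $\D_{(Cx)^{1/2}} = \D_{x^{1/2}}$ and $\norm{(Cx)^{1/2}\xi}^2 = C\norm{x^{1/2}\xi}^2$, so everything hinges on understanding $zx$ as a positive self-adjoint operator and on comparing $(zx)^{1/2}$ with $x^{1/2}$.

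First I would establish that $z$ and $x$ commute in the strong (spectral) sense. Because $x \eta M$, all spectral projections $\chi_B(x)$ of $x$ belong to $M$; since $z \in \mathcal{Z}(M) \subseteq M'$, the element $z$ commutes with every such projection, and hence $z$ and $x$ admit a joint spectral resolution. Concretely, let $E$ be the joint spectral measure, supported on $\RPlus \times [0,C]$, with $x = \int \lambda\, dE(\lambda,\mu)$ and $z = \int \mu\, dE(\lambda,\mu)$; the second coordinate ranges in $[0,C]$ precisely because $0 \le z \le C$. Then the product is given by functional calculus as $zx = \int \lambda\mu\, dE(\lambda,\mu)$, and since the symbol $\lambda\mu$ is nonnegative and measurable, $zx$ is a well-defined positive self-adjoint operator, which already yields $0 \le zx$; moreover its square root is $(zx)^{1/2} = \int (\lambda\mu)^{1/2}\, dE(\lambda,\mu)$.

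It then remains to verify the two requirements of the form definition for $zx \le Cx$. For the domain inclusion, note that for $\xi \in \D_{x^{1/2}} = \D_{(Cx)^{1/2}}$ the pointwise bound $0 \le \lambda\mu \le C\lambda$ gives $\int \lambda\mu \, d\norm{E(\cdot)\xi}^2 \le C \int \lambda\, d\norm{E(\cdot)\xi}^2 < \infty$, so $\xi \in \D_{(zx)^{1/2}}$, i.e. $\D_{(Cx)^{1/2}} \subseteq \D_{(zx)^{1/2}}$. For the norm estimate, the same bound gives, for every such $\xi$,
\[
\norm{(zx)^{1/2}\xi}^2 = \int \lambda\mu\, d\norm{E(\cdot)\xi}^2 \le C\int \lambda\, d\norm{E(\cdot)\xi}^2 = C\norm{x^{1/2}\xi}^2 = \norm{(Cx)^{1/2}\xi}^2,
\]
which is exactly $zx \le Cx$. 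I expect the main obstacle to be purely technical: justifying the strong commutativity of the unbounded $x$ with $z$ and the attendant joint functional calculus with full care for domains and self-adjointness, rather than any genuinely hard estimate. Once the joint spectral measure is in hand, both conclusions are immediate consequences of $0 \le \lambda\mu \le C\lambda$.
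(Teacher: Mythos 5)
Your argument is correct, but it follows a genuinely different route from the paper's. You build the joint spectral resolution $E$ of the strongly commuting pair $(x,z)$ (legitimate, since $z\in\mathcal{Z}(M)$ is bounded and commutes with the spectral projections of $x$, which lie in $M$), represent $zx$ as $\int \lambda\mu\, dE$, and read off both $0\le zx$ and $zx\le Cx$ from the pointwise bound $0\le\lambda\mu\le C\lambda$. The paper instead gives a three-line quadratic-form computation: for $\xi\in\D_x$ it writes $\norm{(zx)^{1/2}\xi}^2=\inner{zx\xi}{\xi}=\inner{x^{1/2}z\xi}{x^{1/2}\xi}=\inner{zx^{1/2}\xi}{x^{1/2}\xi}\le C\norm{x^{1/2}\xi}^2$, using only that $zx\subset xz$ and that $z$ commutes with $x^{1/2}$ because $x\eta M$. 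The trade-off is clear: the paper's proof is shorter and needs no joint functional calculus, but it tacitly treats $zx$ as a positive self-adjoint operator and only verifies the inequality on $\D_x$; your construction makes the positivity and self-adjointness of $zx$ transparent and establishes the estimate on the larger form domain $\D_{x^{1/2}}$. The one technical point you should make explicit is that $zx$ here means the closure of the product with natural domain $\D_x$ (the standard convention in $L^0$), since $z\bigl(x|_{\D_x}\bigr)$ need not itself be closed; with that convention your identification $zx=\int\lambda\mu\,dE$ is exactly right and the proof is complete.
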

	
	\begin{proof}
		First observe that $\D_{zx}= \D_x \subseteq \D_{x^{1/2}}$. Also, $\D_{zx} \subseteq \D_{(zx)^{1/2}}$. Let $\xi \in \D_x$. Then
		\begin{align*}
			\norm{(zx)^{1/2} \xi}^2 = \inner{zx \xi}{\xi} &= \inner{xz \xi}{ \xi} ~~ \text{( since $zx \subset xz$)}\\
			&= \inner{x^{1/2}z \xi}{x^{1/2} \xi} ~~ \text{( since $\xi \in \D_{x^{1/2}}$)}\\
			&= \inner{z x^{1/2} \xi}{x^{1/2} \xi} ~~ \text{( since $x \eta M$)}\\
			&\leq C \norm{x^{1/2} \xi}^{2}.
		\end{align*}
	\end{proof}
	
	A closed, densely defined operator $x$ affiliated to $M$ is said to be $\tau$-measurable if for every $\epsilon >0$ there is a projection $e$ in $M$ such that $e \h \subseteq \EssD_x$ and $\tau(e^\perp)< \epsilon$. The set of all $\tau$-measurable operators associated to $M$ is denoted by $L^0(M, \tau)$ or simply $L^0$. For all $\epsilon, \delta >0$, let us define the following neighborhoods of zero.
	\begin{align*} 
		\mathcal{N}(\epsilon, \delta):= 
		\{ x \in L^0: \exists ~ e \in \CP(M) \text{ such that }  \norm{xe} \leq \epsilon \text{ and } \tau(e^\perp) \leq \delta \},
		\text{ and }\\
		\mathcal{N'}(\epsilon, \delta):= 
		\{ x \in L^0: \exists ~ e \in \CP(M) \text{ such that }  \norm{exe} \leq \epsilon \text{ and } \tau(e^\perp) \leq \delta \}.
	\end{align*}
	
	\noindent It is established in \cite[Theorem 2.2]{chilin_few_2005} that the families $\{\mathcal{N}(\epsilon, \delta) : \epsilon>0, \delta>0 \}$ and $\{\mathcal{N'}(\epsilon, \delta) : \epsilon>0, \delta>0 \}$ generate same topology on $L^0$, and it is termed as measure topology in the literature. It is also well-known that $L^0$ becomes a complete, metrizable topological $*$-algebra with respect to the measure topology containing $M$ as a dense subspace [see \cite[Theorem 4.12]{Hiai:2021vl}].
	
	In this article, we also deal with so called almost uniform (a.u.) and bilateral almost uniform (b.a.u) convergence of sequences in $L^0$. We describe it in the following definition.
	
	\begin{defn}
		A sequence of operators $\{x_n\}_{n \in \N} \subset L^0$ converges a.u. (resp. b.a.u.) to $x \in L^0$ if for all $\delta > 0$ there exists a projection $e \in M$ such that
		\begin{align*}
			\tau(e^\perp) < \delta \text{ and } \lim_{n \to \infty} \norm{(x_n-x) e} = 0 ~~(\text{resp. } \tau(e^\perp) < \delta \text{ and } \lim_{n \to \infty} \norm{e (x_n-x) e} =0).
		\end{align*}
	\end{defn}
	
	Now we recall the following useful lemma from \cite[Lemma 3]{Litvinov:2001wu}.
	
	\begin{lem}\label{app by covg seq}
		If a sequence $\{a_n\}_{n \in \N} \subset M$ is such that for every $\epsilon >0$ there is a b.a.u. (a.u. ) convergent sequence $\{b_n\}_{n \in \N} \subset M$ and a positive integer $N_0$ satisfying	$\norm{a_n - b_n} < \epsilon$ for all $n \geq N_0$, then $\{a_n\}_{n \in \N}$ converges b.a.u. (a.u.).
	\end{lem}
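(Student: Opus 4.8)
The plan is to carry out the argument for b.a.u.\ convergence; the a.u.\ case follows verbatim after replacing each expression $\norm{e(\,\cdot\,)e}$ by $\norm{(\,\cdot\,)e}$ and each neighbourhood $\mathcal{N}'$ by $\mathcal{N}$. Since b.a.u.\ convergence is defined relative to a prescribed limit in $L^0$, the first task is to manufacture that limit. Applying the hypothesis with $\epsilon = 1/k$ yields, for each $k \in \N$, a b.a.u.\ convergent sequence $\{b^{(k)}_n\}_{n}$ with limit $b^{(k)} \in L^0$ together with an index $N_k$ such that $\norm{a_n - b^{(k)}_n} < 1/k$ for all $n \geq N_k$. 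I would first show that $\{b^{(k)}\}_k$ is Cauchy in the measure topology. Fixing $\delta>0$, for a pair $(k,m)$ I choose, from the b.a.u.\ convergence of the two sequences, projections $f_k, f_m$ with $\tau(f_k^\perp), \tau(f_m^\perp) < \delta/2$ on which $\norm{f_k(b^{(k)}_n - b^{(k)})f_k} \to 0$ and $\norm{f_m(b^{(m)}_n - b^{(m)})f_m} \to 0$; with $f = f_k \wedge f_m$ (so $\tau(f^\perp) < \delta$) and the elementary bound $\norm{f y f} \leq \norm{g y g}$ valid whenever $f \leq g$, a triangle inequality routed through $b^{(k)}_n$ and $b^{(m)}_n$ together with $\norm{b^{(k)}_n - b^{(m)}_n} < 1/k + 1/m$ (for $n \geq \max(N_k,N_m)$) gives, after letting $n \to \infty$, the estimate $\norm{f(b^{(k)} - b^{(m)})f} \leq 1/k + 1/m$. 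Hence $b^{(k)} - b^{(m)} \in \mathcal{N}'(1/k+1/m,\, \delta)$, so $\{b^{(k)}\}_k$ is Cauchy; as $L^0$ is complete in the measure topology, $b^{(k)} \to x$ for some $x \in L^0$.

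With $x$ in hand, I fix $\delta>0$ and assemble a single projection suited to every scale at once: for each $k$ pick $g_k$ from the b.a.u.\ convergence of $\{b^{(k)}_n\}_n$ with $\tau(g_k^\perp) < \delta\,2^{-(k+1)}$ and $\norm{g_k(b^{(k)}_n - b^{(k)})g_k} \to 0$, and put $e = \bigwedge_{k} g_k$, so that $\tau(e^\perp) \leq \sum_k \tau(g_k^\perp) < \delta$. Because $e \leq g_k$ for every $k$, the same triangle inequality as above (now routed through $b^{(k)}_n$ and $b^{(m)}_n$, using $\norm{b^{(k)}_n - b^{(m)}_n}<1/k+1/m$) shows that $\{e\,b^{(k)}\,e\}_k$ is norm-Cauchy in $M$, with some limit $x_e \in eMe$; on the other hand $b^{(k)} \to x$ in measure and multiplication by the fixed element $e$ is measure-continuous, so $e\,b^{(k)}\,e \to e x e$ in measure, whence uniqueness of limits forces $x_e = e x e$. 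Using this identification, I estimate, for $n \geq N_k$,
\begin{align*}
\norm{e(a_n - x)e} \leq \norm{e(a_n - b^{(k)}_n)e} + \norm{g_k(b^{(k)}_n - b^{(k)})g_k} + \norm{e\,b^{(k)}\,e - x_e},
\end{align*}
where the first summand is $< 1/k$, the second tends to $0$ as $n \to \infty$ for fixed $k$, and the third tends to $0$ as $k \to \infty$. Taking $\limsup_n$ and then $k \to \infty$ gives $\norm{e(a_n - x)e} \to 0$, which is exactly b.a.u.\ convergence of $\{a_n\}$ to $x$.

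The main obstacle is structural rather than computational: because b.a.u.\ convergence is tied to a fixed limit in $L^0$, one cannot simply produce, for each $\delta$, a projection on which $\{a_n\}$ is norm-convergent and be done, since the resulting per-projection limits must be reconciled into one measurable operator. This is precisely why the argument splits into producing $x$ through a Cauchy-in-measure estimate and the completeness of $(L^0, \text{measure topology})$, and then identifying the norm limit $x_e = \lim_k e\,b^{(k)}\,e$ with $e x e$. That identification is the delicate step, being the only place where the operator norm and the measure topology must be played against each other, via continuity of multiplication by $e$ and uniqueness of limits; everything else reduces to routine triangle inequalities, the monotonicity $\norm{f y f} \leq \norm{g y g}$ for $f \leq g$, and the subadditivity $\tau\big((\bigwedge_k g_k)^\perp\big) \leq \sum_k \tau(g_k^\perp)$.
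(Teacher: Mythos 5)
Your argument is correct. Note that the paper itself gives no proof of this lemma --- it is quoted from Litvinov--Mukhamedov \cite{Litvinov:2001wu} --- so there is nothing internal to compare against; your write-up is essentially the standard argument behind that reference: use the approximants at scales $\epsilon=1/k$ to show the candidate limits $b^{(k)}$ are Cauchy in measure, invoke completeness of $L^0$ to produce $x$, and then reconcile the per-projection norm limits with $exe$ via continuity of multiplication by $e$ and Hausdorffness of the measure topology. All the individual steps check out (the monotonicity $\norm{fyf}\leq\norm{gyg}$ for $f\leq g$, the subadditivity $\tau\bigl(\bigl(\bigwedge_k g_k\bigr)^\perp\bigr)\leq\sum_k\tau(g_k^\perp)$, and the final $\limsup$ bookkeeping), and the a.u.\ case does carry over verbatim with one-sided compressions.
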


	Next we provide a brief description of noncommutative Orlicz spaces. We follow \cite{Pisier:2003wz} as our main references.
	
	\subsection{Noncommutative Orlicz spaces.} Let $M$ be a von Neumann algebra equipped with a f.n.s trace $\tau$ as mentioned above. The trace $\tau$ is extended to the positive cone $L^0_+$ of $L^0$ as follows. Suppose  $x \in L^0_+$ with the spectral decomposition $x= \int_{0}^{\infty} \lambda de_{\lambda}$. Then $\tau(x)$ is defined by
	\begin{align*}
		\tau(x):= \int_{0}^{\infty} \lambda d\tau(e_{\lambda}).
	\end{align*}
	For $0< p \leq \infty$, the noncommutative $L^p$-space associated to $(M, \tau)$ is defined as
	\begin{align*}
		L^p(M,\tau):= 
		\begin{cases}
			\{x \in L^0 : \norm{x}:= \tau(\abs{x}^p)^{1/p} < \infty\}  & \text{ for } p \neq \infty \\
			(M, \norm{\cdot}) & \text{ for } p= \infty
		\end{cases}
	\end{align*} 
	where, $\abs{x}= (x^*x)^{1/2}$. From here onwards we will simply write $L^p$ for noncommutative $L^p$-spaces.
	
	Let $x \in L^0$. Consider the spectral decomposition $\abs{x}= \int_{0}^{\infty} s de_s$. The distribution function of $x$ is defined by 
	\begin{align*}
		(0, \infty): s \mapsto \lambda_s(x):= \tau(e_s^\perp(\abs{x})) \in [0, \infty]
	\end{align*}
	and the generalised singular number of $x$ is defined by
	\begin{align*}
		(0, \infty): t \mapsto \mu_t(x):= \inf \{s>0 : \lambda_s(x) \leq t \} \in [0, \infty].
	\end{align*}
	Note that both the functions are decreasing and continuous from right on $(0, \infty)$. Among many other properties of generalised singular number, here we recall the following ones which will be used later.
	
	\begin{prop}\label{sin num prop}
		Let $a,b,c \in L^0$. Then 
		\begin{enumerate}
			\item[$(i)$] $\mu_t(f(\abs{a}))= f(\mu_t(a))$, $t>0$ and for any continuous increasing function $f$ on $[0, \infty)$ with $f(0) \geq 0$.
			\item[$(ii)$] $\mu_t(bac) \leq \norm{b} \norm{c} \mu_t(a)$ for all $t>0$.
			\item[$(iii)$] $\tau(f(\abs{a}))= \int_{0}^{\infty} f(\mu_t(a)) dt$ for any continuous increasing function $f$ on $[0, \infty)$ with $f(0) = 0$.
		\end{enumerate}
	\end{prop}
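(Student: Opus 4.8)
The three assertions are the standard properties of generalised singular numbers, and the unifying tool in my plan is the operator-norm cut-off description of $\mu_t$, together with its invariance under taking moduli and adjoints. First I would establish, for any $x \in L^0$, the identity
\[
  \mu_t(x) = \inf\{\norm{xe} : e \in \CP(M),\ \tau(e^\perp) \le t\},
\]
and the symmetries $\mu_t(x) = \mu_t(x^*) = \mu_t(\abs{x})$. The last two follow from the polar decomposition $x = u\abs{x}$: since $\abs{x^*} = u\abs{x}u^*$ and $\tau$ is tracial, $\abs{x}$ and $\abs{x^*}$ share the distribution function $\lambda_s$, hence the same $\mu_t$. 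For the cut-off identity, the inequality ``$\ge$'' is witnessed by $e = \mathbbm{1}_{[0,\mu_t(x)]}(\abs{x})$, for which $\tau(e^\perp) = \lambda_{\mu_t(x)}(x) \le t$ (by right continuity of $\lambda$) and $\norm{xe} = \norm{\abs{x}e} \le \mu_t(x)$. For ``$\le$'', given $e$ with $\tau(e^\perp) \le t$ and any $c < \mu_t(x)$, the projection $p = \mathbbm{1}_{(c,\infty)}(\abs{x})$ satisfies $\tau(p) = \lambda_c(x) > t \ge \tau(e^\perp)$; the Kaplansky comparison $\tau(p \wedge e) \ge \tau(p) - \tau(e^\perp) > 0$ then yields a unit vector $\xi$ in the range of $p \wedge e$ with $e\xi = \xi$ and $\norm{\abs{x}\xi} > c$, forcing $\norm{xe} = \norm{\abs{x}e} > c$, and letting $c \uparrow \mu_t(x)$ gives the bound. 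This comparison-of-projections step is the main technical obstacle of the whole proposition.

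For part $(i)$, since $f$ is increasing with $f(0)\ge 0$, the operator $f(\abs{a}) = \int_0^\infty f(s)\,de_s$ is positive, so $\abs{f(\abs{a})} = f(\abs{a})$. Writing $r_s := \sup\{u \ge 0 : f(u) \le s\}$ for the generalised inverse of the continuous increasing function $f$, the spectral mapping $\mathbbm{1}_{(s,\infty)}(f(\abs{a})) = \mathbbm{1}_{(r_s,\infty)}(\abs{a})$ gives $\lambda_s(f(\abs{a})) = \lambda_{r_s}(a)$. Feeding this into the definition of $\mu_t$, using the elementary equivalence $\lambda_r(a)\le t \iff r \ge \mu_t(a)$ and the fact that $f$ is continuous and increasing (so that $r_s \ge \mu_t(a) \iff s \ge f(\mu_t(a))$), yields $\mu_t(f(\abs{a})) = f(\mu_t(a))$.

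For part $(ii)$, applying the cut-off identity to $bx$ and to $x$ and using submultiplicativity of the operator norm inside the infimum,
\[
  \mu_t(bx) = \inf_{\tau(e^\perp)\le t}\norm{bxe} \le \norm{b}\inf_{\tau(e^\perp)\le t}\norm{xe} = \norm{b}\,\mu_t(x).
\]
Since a right factor cannot be pulled out of $\norm{ace}$ directly, I route it through the adjoint: by the symmetry above and the displayed bound,
\[
  \mu_t(ac) = \mu_t\big((ac)^*\big) = \mu_t(c^*a^*) \le \norm{c^*}\,\mu_t(a^*) = \norm{c}\,\mu_t(a).
\]
Combining the two gives $\mu_t(bac) \le \norm{b}\,\mu_t(ac) \le \norm{b}\,\norm{c}\,\mu_t(a)$.

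For part $(iii)$, the heart is the equimeasurability of $t \mapsto \mu_t(a)$ (on $(0,\infty)$ with Lebesgue measure) with $\abs{a}$ (relative to its spectral distribution). The duality $\mu_t(a) > s \iff t < \lambda_s(a)$, read off from the definition of $\mu_t$ as the right-continuous generalised inverse of the decreasing function $\lambda$, gives $\abs{\{t>0 : \mu_t(a) > s\}} = \lambda_s(a)$. Hence, for the base case $f = \mathrm{id}$, two applications of the layer-cake formula (the second at the operator level, together with normality of $\tau$) give
\[
  \int_0^\infty \mu_t(a)\,dt = \int_0^\infty \lambda_s(a)\,ds = \tau(\abs{a}).
\]
The general statement then follows by applying this identity to the positive operator $f(\abs{a})$ and invoking part $(i)$: $\tau(f(\abs{a})) = \int_0^\infty \mu_t(f(\abs{a}))\,dt = \int_0^\infty f(\mu_t(a))\,dt$. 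The delicate points here are justifying the duality when $\lambda$ has jumps or flat stretches and interchanging integration with $\tau$ when values may be infinite; both are handled by monotone convergence, since all integrands are non-negative.
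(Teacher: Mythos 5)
The paper offers no argument of its own for this proposition---its ``proof'' is a pointer to Fack--Kosaki, Lemma 2.5 and Corollary 2.8---so there is no in-paper proof to measure yours against; what you have written is a correct, self-contained reconstruction that follows essentially the standard route of that cited reference. Your unifying cut-off identity $\mu_t(x)=\inf\{\norm{xe}:e\in\CP(M),\ \tau(e^\perp)\le t\}$ is exactly the characterization Fack and Kosaki work from (for them it is essentially the definition, reconciled with the distribution-function formula), and the remaining steps---the symmetry $\mu_t(x)=\mu_t(x^*)$ via polar decomposition and traciality, the generalized-inverse/spectral-mapping computation for $(i)$, the adjoint trick to extract the right factor in $(ii)$, and the equimeasurability identity $\int_0^\infty\mu_t(a)\,dt=\int_0^\infty\lambda_s(a)\,ds=\tau(\abs{a})$ combined with $(i)$ for $(iii)$---all check out. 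You correctly identify the one genuinely non-formal ingredient, namely the Kaplansky-type comparison $\tau(p\wedge e)\ge\tau(p)-\tau(e^\perp)$ needed for the lower bound in the cut-off identity. Two cosmetic points deserve a line in a polished write-up: in that lower bound one should note that a unit vector $\xi$ in the range of $p\wedge e$ may fail to lie in $\D_{\abs{x}}$, in which case $xe$ is unbounded and $\norm{xe}=\infty$ trivially exceeds $c$; and in $(i)$ the generalized inverse $r_s$ needs a convention when $s<f(0)$ (there both sides of the spectral-mapping identity reduce to the support projection, so nothing breaks). Neither is a gap. Compared with the paper's citation, your version buys a uniform treatment of all three parts through the single duality $\mu_t(a)>s\iff\lambda_s(a)>t$, at the cost of supplying the projection-comparison lemma explicitly.
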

	
	\begin{proof}
		For the proofs we refer to \cite[Lemma 2.5 and Corollary 2.8]{Fack:1986ua}.
	\end{proof}
	
	\begin{defn}
		A convex function $\Phi: [0, \infty) \to [0, \infty)$ which is continuous at $0$ with $\Phi(0)=0$ and $\Phi(t)>0$ when $t \neq 0$ is called an Orlicz function.
	\end{defn}
	\noindent It is to be noted that the convexity of the function $\Phi$ and continuity at $0$ imply that the function is continuous on $[0, \infty)$. Moreover, it is also evident that $\Phi(\lambda t) \leq \lambda \Phi(t)$ whenever $0 \leq \lambda \leq 1$ and $t \in [0, \infty)$, which implies $\Phi(t_1)< \Phi(t_2)$ for all $0 \leq t_1 < t_2$. Hence the function $\Phi$ is increasing. The following result from \cite[Lemma 2.1]{Chilin:2017uj} is crucial.
	\begin{lem}\label{orlicz fn prop}
		Let $\Phi$ be an Orlicz function. Then for all $\delta >0$ there exists $u>0$ satisfying the condition
		\begin{align*}
			u \cdot \Phi(t) \geq t ~~ \text{ whenever } t \geq \delta.
		\end{align*}
		In particular, $\lim_{t \to \infty} \Phi(t)= \infty$.
	\end{lem}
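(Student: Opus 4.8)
The plan is to exploit the sub-homogeneity inequality $\Phi(\lambda t) \leq \lambda \Phi(t)$ for $0 \leq \lambda \leq 1$, which the surrounding text has already derived from convexity together with $\Phi(0)=0$. The crucial observation is that this inequality is equivalent to the statement that the difference quotient $t \mapsto \Phi(t)/t$ is non-decreasing on $(0,\infty)$; once this is recognized, the claimed bound falls out from a single rescaling, and the explicit constant $u$ can be read off immediately.

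First I would fix $\delta > 0$ and record that $\Phi(\delta) > 0$, which holds by the defining property $\Phi(t)>0$ for $t \neq 0$ since $\delta \neq 0$. For any $t \geq \delta$ I would set $\lambda = \delta/t$, which lies in $(0,1]$, so that $\lambda t = \delta$. Applying the sub-homogeneity inequality gives
\[
\Phi(\delta) = \Phi(\lambda t) \leq \lambda \Phi(t) = \frac{\delta}{t}\,\Phi(t),
\]
and rearranging yields $\Phi(t) \geq \frac{\Phi(\delta)}{\delta}\,t$, that is, $u\,\Phi(t) \geq t$ with the explicit choice $u = \delta/\Phi(\delta) > 0$. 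This settles the main inequality for the given $\delta$.

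For the concluding clause I would specialize to $\delta = 1$, so that the inequality above reads $\Phi(t) \geq \Phi(1)\,t$ for every $t \geq 1$. Since $\Phi(1) > 0$, letting $t \to \infty$ forces $\Phi(t) \to \infty$, which establishes $\lim_{t \to \infty}\Phi(t) = \infty$.

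There is no substantial obstacle here: the entire content is the reduction to the monotonicity of $\Phi(t)/t$, and the paper has already recorded the equivalent form $\Phi(\lambda t) \leq \lambda \Phi(t)$. The only points requiring a moment's care are choosing the correct scaling factor $\lambda = \delta/t$ (so that $\lambda \in (0,1]$ precisely when $t \geq \delta$) and invoking $\Phi(\delta) > 0$ to guarantee that the reciprocal defining $u$ is well defined and positive.
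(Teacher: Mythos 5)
Your proof is correct and complete: the rescaling $\lambda=\delta/t$ applied to the sub-homogeneity inequality $\Phi(\lambda t)\leq\lambda\Phi(t)$ (which the paper has already recorded) immediately yields $u=\delta/\Phi(\delta)$, and the specialization $\delta=1$ gives the limit claim. The paper itself offers no proof, only a citation to \cite[Lemma 2.1]{Chilin:2017uj}, and your argument is exactly the standard one used there (monotonicity of $t\mapsto\Phi(t)/t$), so there is nothing to add.
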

	
	Now let $\Phi$ be an Orlicz function and consider $x \in L^0_+$ with the spectral decomposition $x= \int_{0}^{\infty} \lambda d(e_{\lambda})$. Then by means of functional calculus, we have
	\begin{align*}
		\Phi(x) = \int_{0}^{\infty} \Phi(\lambda) de_{\lambda}.
	\end{align*}
	The noncommutative Orlicz space associated to $(M, \tau)$ for the Orlicz function $\Phi$ is defined as
	\begin{align*}
		L^\Phi= L^\Phi(M,\tau) := \Big\{x \in L^0: \tau \Big(\Phi \Big(\frac{\abs{x}}{\lambda} \Big) \Big) < \infty \text{ for some } \lambda>0 \Big\}.
	\end{align*}
	The space $L^\Phi$ is equipped with the norm (called Luxemburg norm)
	\begin{align*}
		\norm{x}:= \inf \Big\{ \lambda>0 : \tau \Big(\Phi \Big(\frac{\abs{x}}{\lambda} \Big) \Big) \leq 1 \Big\}, ~~ x \in L^\Phi.
	\end{align*}
	It follows from \cite[Proposition 2.5]{Kunze:1990vc} that $L^\Phi$ equipped with the norm defined above is a Banach space. We now prove the following result.

	\begin{prop}\label{orlicz norm prop}
		Suppose  $x \in L^\Phi$, then
		\begin{enumerate}
			\item[$(i)$]  if  $a,b \in M$, then  $axb \in L^\Phi$. Moreover, $\norm{axb}_\Phi \leq \norm{a} \norm{b} \norm{x}_\Phi$ and 
			\item[$(ii)$] if  $\norm{x} \leq 1$, then $\tau(\Phi(\abs{x})) \leq \norm{x}$.
		\end{enumerate}
	\end{prop}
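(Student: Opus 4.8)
The plan is to reduce both parts to scalar statements about the generalised singular numbers via the integral formula in Proposition \ref{sin num prop}$(iii)$, applied with $f = \Phi$; this is legitimate because an Orlicz function is continuous, increasing, and satisfies $\Phi(0)=0$. The two ingredients I will lean on are the estimate $\mu_t(axb) \le \norm{a}\,\norm{b}\,\mu_t(x)$ from Proposition \ref{sin num prop}$(ii)$ and the elementary inequality $\Phi(\lambda s) \le \lambda\Phi(s)$ for $0 \le \lambda \le 1$ noted just after the definition of an Orlicz function, together with the monotonicity of $\Phi$ and the positive homogeneity $\mu_t(x/\lambda) = \mu_t(x)/\lambda$ of the singular numbers.

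For $(i)$ I would first dispose of the trivial case $\norm{a}=0$ or $\norm{b}=0$, where $axb=0$. Otherwise, fix any $\lambda>0$ with $\tau(\Phi(\abs{x}/\lambda)) \le 1$ (such $\lambda$ exists since $x \in L^\Phi$ makes $\tau(\Phi(\abs{x}/\lambda))$ finite for some $\lambda$, after which it decreases to $0$ as $\lambda \to \infty$ by monotone convergence), and set $\mu = \norm{a}\,\norm{b}\,\lambda$. From $\mu_t(axb)/\mu \le \mu_t(x)/\lambda$ and the monotonicity of $\Phi$, integration yields
\[
\tau\Big(\Phi\Big(\frac{\abs{axb}}{\mu}\Big)\Big) = \int_0^\infty \Phi\Big(\frac{\mu_t(axb)}{\mu}\Big)\,dt \le \int_0^\infty \Phi\Big(\frac{\mu_t(x)}{\lambda}\Big)\,dt = \tau\Big(\Phi\Big(\frac{\abs{x}}{\lambda}\Big)\Big) \le 1 .
\]
In particular the left-hand side is finite, so $axb \in L^\Phi$ and $\norm{axb}_\Phi \le \mu = \norm{a}\,\norm{b}\,\lambda$; taking the infimum over admissible $\lambda$ gives $\norm{axb}_\Phi \le \norm{a}\,\norm{b}\,\norm{x}_\Phi$.

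For $(ii)$, writing $\lambda_0 = \norm{x}_\Phi \le 1$, the case $\lambda_0 = 0$ forces $x=0$ and the claim is immediate. When $0 < \lambda_0 \le 1$, the only delicate step is to promote the infimum defining the Luxemburg norm to an actual bound at $\lambda_0$, namely $\tau(\Phi(\abs{x}/\lambda_0)) \le 1$. I would obtain this by a monotone convergence argument: choosing $\lambda_n \downarrow \lambda_0$ with $\tau(\Phi(\abs{x}/\lambda_n)) \le 1$, the integrands $\Phi(\mu_t(x)/\lambda_n)$ increase pointwise to $\Phi(\mu_t(x)/\lambda_0)$ by continuity of $\Phi$, and the monotone convergence theorem transfers the bound $\le 1$ to the limit. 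Finally, since $0 \le \lambda_0 \le 1$, applying $\Phi(\lambda_0 s) \le \lambda_0\Phi(s)$ with $s=\mu_t(x)/\lambda_0$ and integrating gives
\[
\tau(\Phi(\abs{x})) = \int_0^\infty \Phi(\mu_t(x))\,dt \le \lambda_0 \int_0^\infty \Phi\Big(\frac{\mu_t(x)}{\lambda_0}\Big)\,dt = \lambda_0\,\tau\Big(\Phi\Big(\frac{\abs{x}}{\lambda_0}\Big)\Big) \le \lambda_0 = \norm{x}_\Phi ,
\]
which is the desired estimate. I expect this monotone convergence step in $(ii)$ to be the only place requiring genuine care; the remaining manipulations are direct transcriptions of the singular-number identities of Proposition \ref{sin num prop}.
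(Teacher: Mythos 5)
Your proof of part $(i)$ is correct and is essentially the paper's own argument: both reduce to the singular-number estimate $\mu_t(axb)\le\norm{a}\norm{b}\mu_t(x)$ and the trace formula $\tau(\Phi(\abs{y}))=\int_0^\infty\Phi(\mu_t(y))\,dt$, then rescale $\lambda\mapsto\norm{a}\norm{b}\lambda$ in the Luxemburg infimum. For part $(ii)$ the paper simply cites \cite[Proposition 2.2]{Chilin:2017uj}, whereas you supply a complete argument; your two steps there --- promoting the infimum to the bound $\tau(\Phi(\abs{x}/\lambda_0))\le 1$ at $\lambda_0=\norm{x}_\Phi$ via monotone convergence along $\lambda_n\downarrow\lambda_0$, and then applying $\Phi(\lambda_0 s)\le\lambda_0\Phi(s)$ --- are both sound, so no gap remains.
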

	
	\begin{proof}
		\emph{$(i)$}  Let $\lambda>0$ and observe that
		\begin{align}\label{tau ineq}
			\begin{split}
				\tau \Big(\Phi \Big(\frac{\abs{axb}}{\norm{a} \norm{b} \lambda} \Big) \Big)
				&= \int_{0}^{\infty} \Phi \Big(\mu_t \Big(\frac{axb}{\norm{a} \norm{b}\lambda} \Big) \Big) dt ~~[\text{ by $(iii)$ of Proposition \ref{sin num prop}} ] \\
				& \leq \int_{0}^{\infty} \Phi \Big(\mu_t \Big(\frac{x}{\lambda} \Big) \Big) dt ~~[\text{ by $(ii)$ of Proposition \ref{sin num prop}}] \\
				&= \tau \Big(\Phi \Big(\frac{\abs{x}}{\lambda} \Big) \Big) ~~[\text{ by $(iii)$ of Proposition \ref{sin num prop} }].
			\end{split}
		\end{align}
		Then, note that
		\begin{align*}
			\inf \{\lambda >0 : \tau \Big(\Phi \Big(\frac{\abs{axb}}{\lambda} \Big) \Big) \leq 1\}
			&= \inf \{\norm{a} \norm{b} \lambda >0 : \tau \Big(\Phi \Big(\frac{\abs{axb}}{\norm{a} \norm{b} \lambda} \Big) \Big) \leq 1\} \\
			&= \norm{a} \norm{b} \inf \{\lambda >0 : \tau \Big(\Phi \Big(\frac{\abs{axb}}{\norm{a} \norm{b} \lambda} \Big) \Big) \leq 1\}.
		\end{align*}
		Therefore, by Eq. \ref{tau ineq} we have 
		\begin{align*}
			\norm{axb}_\Phi = \inf \{\lambda >0 : \tau \Big(\Phi \Big(\frac{\abs{axb}}{\lambda} \Big) \Big) \leq 1\} & \leq \norm{a} \norm{b} \inf \{\lambda >0 : \tau \Big(\Phi \Big(\frac{\abs{x}}{\lambda} \Big) \Big) \leq 1 \} \\
			& = \norm{a} \norm{b} \norm{x}_\Phi.
		\end{align*}
		\emph{Proof of $(ii)$};  it follows immediately from \cite[Proposition 2.2]{Chilin:2017uj}.
	\end{proof}
	
	Let us now recall that a Banach space $(E, \norm{\cdot}) \subset L^0$ is called fully symmetric if
	\begin{align*}
		x \in E, ~ y \in L^0, ~ \int_{0}^{s} \mu_t(y) dt \leq \int_{0}^{s} \mu_t(x) dt ~~  \forall ~~ s>0 ~~ \Rightarrow ~~ y \in E ~\text{ and } \norm{y} \leq \norm{x}
	\end{align*}
	and a fully symmetric space $(E, \norm{\cdot}) \subseteq L^0$ is said to have Fatou Property if 
	\begin{align*}
		x_{\alpha} \in E_+, ~~ x_{\alpha} \leq x_{\beta} ~~ \text{ for } \alpha \leq \beta  \text{ and } \sup_{\alpha} \norm{x_{\alpha}} < \infty \Rightarrow \exists ~ x= \sup_{\alpha} x_\alpha \in E \text{ and } \norm{x}= \sup_{\alpha} \norm{x_{\alpha}}.
	\end{align*}
	Now the following proposition holds true.
	
	\begin{prop}\label{fatou prop of orlicz}
		$(L^{\Phi}, \norm{\cdot})$ is a fully symmetric space with the Fatou property and an exact interpolation space for the Banach couple $(L^1, M)$.
	\end{prop}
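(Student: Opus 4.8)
The plan is to reduce each of the three assertions to scalar majorization theory on the half--line by means of the singular value function. The decisive preliminary observation is that, for every $\lambda>0$, part $(iii)$ of Proposition \ref{sin num prop} applied to the continuous increasing function $t\mapsto \Phi(t/\lambda)$ (which vanishes at the origin) yields
\begin{align*}
\tau\Big(\Phi\Big(\frac{\abs{x}}{\lambda}\Big)\Big)=\int_0^\infty \Phi\Big(\frac{\mu_t(x)}{\lambda}\Big)\,dt,
\end{align*}
so that $\norm{x}_\Phi=\inf\{\lambda>0:\int_0^\infty \Phi(\mu_t(x)/\lambda)\,dt\le 1\}$; in other words the Luxemburg norm of $x$ is exactly the commutative Orlicz norm of the decreasing rearrangement $\mu(x)$. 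Combined with the standard fact that $0\le a\le b\Rightarrow \mu_t(a)\le\mu_t(b)$, this already shows that $\norm{\cdot}_\Phi$ is monotone on $L^0_+$, a fact used repeatedly below.

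For full symmetry I would take $x\in L^\Phi$ and $y\in L^0$ with $\int_0^s\mu_t(y)\,dt\le\int_0^s\mu_t(x)\,dt$ for all $s>0$, and fix $\lambda>\norm{x}_\Phi$, so that $\int_0^\infty\Phi(\mu_t(x)/\lambda)\,dt\le 1$ by the definition of the Luxemburg norm. Since $\mu(y)/\lambda$ is submajorized by $\mu(x)/\lambda$ and $\Phi$ is convex and increasing with $\Phi(0)=0$, the classical Hardy--Littlewood--P\'olya--Karamata inequality gives
\begin{align*}
\int_0^\infty\Phi\Big(\frac{\mu_t(y)}{\lambda}\Big)\,dt\le\int_0^\infty\Phi\Big(\frac{\mu_t(x)}{\lambda}\Big)\,dt\le 1,
\end{align*}
whence $y\in L^\Phi$ with $\norm{y}_\Phi\le\lambda$; letting $\lambda\downarrow\norm{x}_\Phi$ yields $\norm{y}_\Phi\le\norm{x}_\Phi$.

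For the Fatou property I would start from an increasing net $(x_\alpha)\subset L^\Phi_+$ with $K:=\sup_\alpha\norm{x_\alpha}_\Phi<\infty$. Choosing $\lambda>K$, the bound $\int_0^\infty\Phi(\mu_t(x_\alpha)/\lambda)\,dt\le 1$ together with the monotonicity of $s\mapsto\Phi(\mu_s(x_\alpha)/\lambda)$ forces $t\,\Phi(\mu_t(x_\alpha)/\lambda)\le 1$, hence $\mu_t(x_\alpha)\le\lambda\,\Phi^{-1}(1/t)$ uniformly in $\alpha$, so the net is bounded in measure. Invoking the monotone completeness of $L^0_+$, such a net admits a supremum $x=\sup_\alpha x_\alpha\in L^0_+$ to which it converges in measure, and $\mu_t(x_\alpha)\uparrow\mu_t(x)$. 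The scalar monotone convergence theorem then gives $\int_0^\infty\Phi(\mu_t(x)/\lambda)\,dt=\sup_\alpha\int_0^\infty\Phi(\mu_t(x_\alpha)/\lambda)\,dt$, and unwinding the Luxemburg infimum yields $x\in L^\Phi$ together with the identity $\norm{x}_\Phi=\sup_\alpha\norm{x_\alpha}_\Phi$.

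Finally, for the interpolation statement I would use that $L^1\cap M\subseteq L^\Phi\subseteq L^1+M$ (valid for any symmetric space between the endpoints) and that the $K$--functional of the couple $(L^1,M)$ is $K(t,x)=\int_0^t\mu_s(x)\,ds$ (see e.g. \cite{Fack:1986ua}). If $T$ is a contraction on both $L^1$ and $M$, then $K(t,Tx)\le K(t,x)$ for every $t>0$, i.e.\ $\mu(Tx)$ is submajorized by $\mu(x)$; full symmetry, just established, then forces $\norm{Tx}_\Phi\le\norm{x}_\Phi$, which is precisely the statement that $L^\Phi$ is an exact interpolation space for $(L^1,M)$. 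The only genuinely noncommutative inputs are this identification of the $K$--functional and the monotone completeness of $L^0_+$ used in the Fatou step; everything else reduces, through the displayed singular--value identity, to scalar majorization on $(0,\infty)$. I therefore expect the main obstacle to be justifying the passage to the supremum in measure and invoking the $K$--functional description correctly, the remaining estimates being routine.
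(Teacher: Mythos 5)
Your outline is correct, but note that the paper does not prove this proposition at all: it simply cites \cite[Corollary 2.2]{Chilin:2017uj}, which in turn rests on essentially the machinery you reconstruct. What you supply is a self-contained reduction to scalar majorization on $(0,\infty)$: the identity $\tau(\Phi(\abs{x}/\lambda))=\int_0^\infty\Phi(\mu_t(x)/\lambda)\,dt$ identifies the Luxemburg norm with that of the decreasing rearrangement, after which full symmetry follows from the Hardy--Littlewood--P\'olya submajorization inequality, the Fatou property from monotone completeness of $L^0_+$ together with $\mu_t(x_\alpha)\uparrow\mu_t(x)$, and exactness of interpolation from the Calder\'on--Mityagin description $K(t,x;L^1,M)=\int_0^t\mu_s(x)\,ds$ combined with the full symmetry just proved. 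This is the standard route and each ingredient is available in \cite{Fack:1986ua} and the Dodds--Dodds--Sukochev circle of ideas, so the argument goes through; the two places where you lean on nontrivial noncommutative facts without proof are exactly the ones you flag (existence of $\sup_\alpha x_\alpha$ in $L^0_+$ with convergence of singular values, and the $K$-functional formula), and a third small point worth making explicit is the inclusion $L^1\cap M\subseteq L^\Phi\subseteq L^1+M$, which you assert parenthetically but which is needed even to make sense of $Tx$ for $x\in L^\Phi$ (it follows from Lemma \ref{orlicz fn prop} and convexity of $\Phi$). The trade-off is clear: the paper's citation is shorter and defers all of this to the literature, while your version makes visible that the only genuinely operator-algebraic content is the Fack--Kosaki transfer to rearrangements.
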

	
	\begin{proof}
		Proof follows from \cite[Corollary 2.2]{Chilin:2017uj}.
	\end{proof}
	
	As a consequence we remark the following.
	\begin{rem}\label{unit ball closed}
		It follows from \cite[Theorem 4.1]{Dodds:2005ty} and Proposition \ref{fatou prop of orlicz} that unit ball of $(L^{\Phi}, \norm{\cdot})$ is closed under measure topology.
	\end{rem}
	
	\begin{defn}
		An Orlicz function $\Phi$ is said to satisfy $\Delta_2$ condition if there exists $d>0$ such that
		\begin{align*}
			\Phi(2t) \leq d \Phi(t) \text{ for all } t \geq 0.
		\end{align*}
	\end{defn}
	\noindent Observe that for every $1 \leq p < \infty$, $\Phi(u)= \frac{u^p}{p}$, $u \geq 0$ is an Orlicz function which satisfy the $ \Delta_2$ condition. Also, in this case $L^\Phi=L^p$ for all $1 \leq p < \infty$.
	
	\begin{prop}\label{dense subsp of Orlicz sp}
		Let $\Phi$ be an Orlicz function satisfying $\Delta_2$ condition. Then the linear subspace $L^1 \cap M$ is dense in $(L^{\Phi}, \norm{\cdot})$.
	\end{prop}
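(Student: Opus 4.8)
The plan is to use the $\Delta_2$ condition to trade norm convergence for convergence of the modular $x\mapsto\tau(\Phi(\abs x))$, and then to approximate any $x\in L^\Phi$ by bounded spectral truncations, which will automatically lie in $L^1\cap M$.

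First I would extract two consequences of $\Delta_2$. Iterating $\Phi(2t)\le d\Phi(t)$ gives $\Phi(2^k t)\le d^k\Phi(t)$ for all $k\in\N,\ t\ge0$. Hence, if $y\in L^\Phi$ with $\tau(\Phi(\abs y/\lambda_0))<\infty$, then choosing $k$ with $2^k\ge\lambda_0$ and using that $\Phi$ is increasing yields the functional-calculus inequality $\Phi(\abs y)\le d^k\Phi(\abs y/\lambda_0)$, so $\tau(\Phi(\abs y))<\infty$. The same device yields the crucial implication: if $\{y_n\}\subset L^\Phi$ satisfies $\tau(\Phi(\abs{y_n}))\To0$, then $\norm{y_n}\To0$; indeed, given $\eps>0$ pick $k$ with $2^k\ge1/\eps$, so that $\Phi(\abs{y_n}/\eps)\le d^k\Phi(\abs{y_n})$ and therefore $\tau(\Phi(\abs{y_n}/\eps))\le1$ for all large $n$, i.e.\ $\norm{y_n}\le\eps$ eventually. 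It thus suffices to produce, for each $x\in L^\Phi$, a sequence $x_n\in L^1\cap M$ with $\tau(\Phi(\abs{x-x_n}))\To0$.

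By polar decomposition I would write $x=u\abs x$ with $u\in M$ a partial isometry and $\abs x\in L^\Phi_+$ (as $\mu_t(x)=\mu_t(\abs x)$); if $y_n\in L^1\cap M$ approximate $\abs x$, then $uy_n\in L^1\cap M$ and Proposition \ref{orlicz norm prop}$(i)$ gives $\norm{x-uy_n}=\norm{u(\abs x-y_n)}\le\norm{\abs x-y_n}$. So I may assume $x\ge0$, with spectral decomposition $x=\int_0^\infty\lambda\,de_\lambda$. Then I set $x_n:=x\,\mathbbm{1}_{(1/n,n]}(x)$, so that $\norm{x_n}\le n$ and hence $x_n\in M$. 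To see $x_n\in L^1$, note that since $\Phi$ is increasing and $\Phi(1/n)>0$,
\[
\Phi(1/n)\,\tau\big(\mathbbm{1}_{(1/n,\infty)}(x)\big)\le\tau\big(\Phi(x)\,\mathbbm{1}_{(1/n,\infty)}(x)\big)\le\tau(\Phi(x))<\infty,
\]
so the support projection of $x_n$ has finite trace, and $0\le x_n\le n\,\mathbbm{1}_{(1/n,\infty)}(x)$ forces $\tau(x_n)<\infty$.

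Finally, with $p_n:=1-\mathbbm{1}_{(1/n,n]}(x)=\mathbbm{1}_{[0,1/n]\cup(n,\infty)}(x)$ I have $x-x_n=xp_n$, and since $\Phi(0)=0$ this gives $\Phi(x-x_n)=\Phi(x)p_n$. As $n\to\infty$ the commuting projections $p_n$ decrease to $\mathbbm{1}_{\{0\}}(x)$, on which $\Phi(x)$ vanishes, so $\{\Phi(x)p_n\}$ is a decreasing sequence of positive operators dominated by $\Phi(x)\in L^1$ with infimum $0$; normality of $\tau$ then gives $\tau(\Phi(x-x_n))=\tau(\Phi(x)p_n)\To0$, and the implication from the first step finishes the proof. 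The main obstacle—and the only place the hypothesis is genuinely used—is the equivalence between modular and norm convergence; $\Delta_2$ is precisely what makes both $\tau(\Phi(x))<\infty$ and the final conversion of $\tau(\Phi(x-x_n))\To0$ into $\norm{x-x_n}\To0$ available, whereas the finiteness of the truncations and the decreasing limit are routine consequences of $\tau(\Phi(x))<\infty$ and normality of the trace.
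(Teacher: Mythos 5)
Your proof is correct. Note that the paper offers no argument of its own for this proposition --- it simply defers to \cite{Chilin:2017uj}, Proposition 2.3 --- and your self-contained argument (reduce to $x\ge 0$ via polar decomposition, approximate by the spectral truncations $x\,\mathbbm{1}_{(1/n,n]}(x)\in L^1\cap M$, and use the $\Delta_2$ condition precisely to convert the modular convergence $\tau(\Phi(\abs{x-x_n}))\to 0$ into Luxemburg-norm convergence) is essentially the standard proof underlying that citation. I see no gaps; the only steps needing care --- finiteness of $\tau(\Phi(\abs{x}))$ under $\Delta_2$, the identity $\Phi(xp_n)=\Phi(x)p_n$ from $\Phi(0)=0$, and normality of $\tau$ applied to the decreasing sequence $\Phi(x)p_n$ --- are all handled correctly.
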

	
	\begin{proof}
		For the proof we refer to \cite[Proposition 2.3]{Chilin:2017uj}.
	\end{proof}
	
	\begin{defn}\label{Dunford}
		A linear map $T: L^1 + M \to L^1 + M$ is called Dunford-Schwartz operator if it contracts both $L^1$ and $M$, i.e,
		\begin{align*}
			\norm{Tx}_\infty \leq \norm{x}_\infty ~\forall~ x \in M \text{ and } \norm{Tx}_1 \leq \norm{x}_1 ~\forall~ x \in L^1.
		\end{align*}
		If in addition $T(x) \geq 0$ for all $x \geq 0$ then we call $T$ is a positive Dunford-Schwartz operator. We write $T \in DS$ (resp. $T \in DS^+$) to denote $T$ is a Dunford-Schwartz operator (resp. positive Dunford-Schwartz operator).
	\end{defn}
	
	Let $T \in DS$. Then observe that for an Orlicz function $\Phi$ the space $L^\Phi$ is an exact interpolation space for the Banach couple $(L^1, M)$ (by Proposition \ref{fatou prop of orlicz}). Therefore we have
	\begin{align*}
		T(L^\Phi) \subseteq L^\Phi \text{ and } \norm{T} \leq 1.
	\end{align*}
	
	\begin{defn}
		Let $(X, \norm{\cdot})$ be a normed linear space and $Y \subseteq X$ be such that the zero of $X$ is a limit point of $Y$. A family of maps $A_\alpha: X \to L^0$, $\alpha \in I$, is called uniformly equicontinuous in measure (u.e.m) [ bilaterally uniformly equicontinuous in measure (b.u.e.m)] at zero on $Y$ if for all $\epsilon, \delta>0$,  there exists $\gamma>0$  such that for all $x \in Y$ with $\norm{x} < \gamma$ there exists $e \in \CP(M)$ such that
		\begin{align*}
			\tau(e^{\perp}) < \epsilon \text{ and } \sup_{\alpha \in I} \norm{A_\alpha(x) e}_{\infty} <\delta ~~(\text{respectively, } \sup_{\alpha \in I} \norm{e A_\alpha(x) e}_{\infty} <\delta).
		\end{align*}
	\end{defn}
	
	Now we recall the following significant result from \cite[Theorem 2.1]{Litvinov:2012wk} which will play an important role in our studies.
	\begin{thm}\label{bau closed set 1}
		Let $(X, \norm{\cdot})$ be a Banach space and $A_n: X \to L^0$ be a sequence of additive maps. If the sequence $\seq{A_n}_{n \in \N}$ is b.u.e.m (u.e.m.) at zero on $X$, then the set
		\begin{align*}
			\set{x \in X : \seq{A_n(x)} \text{ converges } \text{b.a.u (a.u.)}}
		\end{align*}
		is closed in $X$.
	\end{thm}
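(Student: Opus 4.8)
The plan is to take a point $x$ in the closure of
$C := \{\, y \in X : \{A_n(y)\}_n \text{ converges b.a.u.}\,\}$ and show directly that $\{A_n(x)\}$ converges b.a.u., so that $C$ is closed. Fix $\{x_k\}_{k \in \N} \subseteq C$ with $x_k \to x$ in $X$. The governing idea is a Banach-principle argument: b.u.e.m.\ at zero lets us transfer the b.a.u.\ convergence of each $\{A_n(x_k)\}_n$ to $\{A_n(x)\}_n$, since additivity gives $A_n(x) = A_n(x_k) + A_n(x - x_k)$ and the increments $A_n(x - x_k)$ can be made uniformly small on a large projection once $\norm{x - x_k}$ is small.

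Concretely, fix $\delta > 0$. For each $k$ I would apply b.u.e.m.\ with $\epsilon = \delta 2^{-(k+1)}$ and tolerance $1/k$ to obtain $\gamma_k > 0$, then (passing to a subsequence of the $x_k$ if needed) select an index with $\norm{x - x_k} < \gamma_k$; b.u.e.m.\ then yields a projection $e_k$ with $\tau(e_k^\perp) < \delta 2^{-(k+1)}$ and $\sup_n \norm{e_k A_n(x - x_k) e_k}_\infty \le 1/k$. Since $x_k \in C$, the sequence $\{A_n(x_k)\}_n$ converges b.a.u., giving a projection $f_k$ with $\tau(f_k^\perp) < \delta 2^{-(k+1)}$ on whose corner $\{f_k A_n(x_k) f_k\}_n$ converges in operator norm. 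Setting $e := \bigwedge_k (e_k \wedge f_k)$, subadditivity of $\tau$ on orthocomplements gives $\tau(e^\perp) < \delta$, while $e \le e_k$ and $e \le f_k$ let me sandwich: writing $e A_n(x) e = e A_n(x_k) e + e A_n(x - x_k) e$, the first summand is norm-Cauchy (inherited from the $f_k$-corner via $e = e f_k$), and the second has norm $\le 1/k$ for every $n$ (via $e = e e_k$). A $3\epsilon$-argument in $k$ then shows $\{e A_n(x) e\}_n$ is Cauchy in operator norm.

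Thus for every $\delta > 0$ there is a projection $e$ with $\tau(e^\perp) < \delta$ for which $\{e A_n(x) e\}_n$ converges in $\norm{\cdot}_\infty$; that is, $\{A_n(x)\}$ is ``b.a.u.-Cauchy''. To finish I would upgrade this to genuine b.a.u.\ convergence. The norm-Cauchy estimate on the corner $e$ places the differences $A_n(x) - A_m(x)$ (for large $n,m$) into the neighborhoods $\mathcal{N}'(\epsilon, \delta)$, so $\{A_n(x)\}$ is Cauchy in measure and hence converges in measure to some $a \in L^0$, using that $L^0$ is a complete metrizable topological $*$-algebra. On each corner $e$ the measure-limit forces $e A_n(x) e \to e a e$, which must coincide with the operator-norm limit, whence $\norm{e (A_n(x) - a) e}_\infty \to 0$; as $\delta$ was arbitrary this is exactly b.a.u.\ convergence of $\{A_n(x)\}$ to $a$, so $x \in C$.

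The u.e.m./a.u.\ statement follows by the identical scheme, replacing the two-sided compressions $e(\cdot)e$ by the one-sided $(\cdot)e$ and the neighborhoods $\mathcal{N}'$ by $\mathcal{N}$. I expect the main obstacle to be the bookkeeping of the single projection $e$: it must be produced as a countable intersection that simultaneously keeps $\tau(e^\perp) < \delta$, dominates the b.a.u.-convergence corner of every approximant $A_n(x_k)$, and controls each increment $A_n(x - x_k)$; and then the clean passage from the resulting b.a.u.-Cauchy property to a single, $\delta$-independent b.a.u.\ limit via completeness of the measure topology.
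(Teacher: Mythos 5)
Your argument is correct: the approximation of $x$ by $x_k\in C$, the use of additivity to split $A_n(x)=A_n(x_k)+A_n(x-x_k)$, the single projection $e=\bigwedge_k(e_k\wedge f_k)$ with geometrically summable trace budget, the $3\epsilon$-argument giving a ``b.a.u.-Cauchy'' property, and the upgrade to a genuine b.a.u.\ limit via completeness of $L^0$ in the measure topology together constitute exactly the standard Banach-principle proof. The paper itself does not prove this statement but recalls it from \cite[Theorem~2.1]{Litvinov:2012wk}, and your proposal is essentially the argument given there (up to trivial bookkeeping, e.g.\ replacing $\delta 2^{-(k+1)}$ by $\delta 2^{-(k+2)}$ to keep $\tau(e^\perp)<\delta$ strict, and phrasing the norm-convergence on the corner in terms of the bounded differences $e(A_n(x)-A_m(x))e$).
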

	
	We end this section with a brief introduction to density and lower density of a sequence of natural numbers.
	\begin{defn}
		A sequence $\mathbf{k}:= \{k_j\}_{j \in \N}$ of natural numbers is said to have density (resp, lower density) $d$ if
		\begin{align*}
			\lim_{n \to \infty} \frac{\abs{\{0,1, \ldots, n\} \cap \mathbf{k}}}{n+1}= d ~~ (\text{resp, } \liminf_{n \to \infty} \frac{\abs{\{0,1, \ldots, n\} \cap \mathbf{k}}}{n+1}= d).
		\end{align*}
	\end{defn}
	
	\begin{rem}\label{rem about lower density}
		We remark that if a sequence $\mathbf{k}$ has density $d$, then $\lim_{n \to \infty} \frac{k_n}{n}= \frac{1}{d}$. Moreover, we recall from \cite[Lemma 40]{Rosenblatt:1994vk} that a sequence $\mathbf{k}$ has lower density $d$ if and only if $\sup_{n \in \N} \frac{k_n}{n} < \infty$.
	\end{rem}

	\section{Convergence along sequence of density one}
	
	Throughout this section $M$ is assumed to be a semifinite von Neumann algebra with f.n.s trace $\tau$ and $T \in DS^+$. In this section, we will study the convergence of ergodic averages with $M$-valued Besicovitch weights (see Definition \ref{M valued avg} and Definition \ref{besicovitch seq}) along sequence of density one. In particular, we will prove the b.a.u. convergence of sequences of such averages in the spaces $L^\Phi$ for some Orlicz function $\Phi$. Convergence of usual vector valued weighted averages in norm and b.a.u. topology has already been studied in \cite{comez_ergodic_2013}. In this section, we also extend some of these results. We begin with few definitions of ergodic averages.
	
	\begin{defn}\label{M valued avg}
		Let $T \in DS^+$.
		For $\{b_j\}_{j \in \N} \subset M$ and $\{d_j\}_{j \in \N} \subset M$ and any sequence $\mathbf{k}:= \{k_j\}_{j \in \N}$ of natural numbers, define
		\begin{align*}
			& A_n(\{b_j\}, \{d_j\}, x):= \frac{1}{n} \sum_{j=0}^{n-1} T^j (b_j x d_j),  \hspace{2em}	A_n(\{b_j\}, x):= \frac{1}{n} \sum_{j=0}^{n-1} T^j (b_j x);
			\\
			\text{and } 
			& A_n^{\mathbf{k}}(\{b_j\}, \{d_j\}, x)):= \frac{1}{n} \sum_{j=0}^{n-1}  T^{k_j} (b_{k_j} x d_{k_j}), \hspace{2em} 	A_n^{\mathbf{k}}(\{b_j\}, x):= \frac{1}{n} \sum_{j=0}^{n-1} T^{k_j} (b_{k_j} x)
		\end{align*}
		for all $n \in \N$ and $x \in L^1 + M$. 
	\end{defn}
	
	Here we observe that when the sequence $\{b_j\}_{j \in \N}$ consists of only scalars $\beta: =\{\beta_j\}_{j \in \N}$ and the set $\{d_j\}_{j \in \N}$ consists of only identity of $M$, then the averages mentioned above will be denoted by $A_n^\beta(x)$ and $A_n^{\beta, \mathbf{k}}(x)$ respectively for $x \in L^1 + M$. Convergence of such averages are studied in \cite{OBrien:2021tw}.
	
	Let us now recall the following maximal ergodic theorem from \cite{Yeadon1977}. This result is crucial in obtaining a maximal ergodic inequality in the form required for our purpose.
	
	\begin{thm}\label{maximal erg thm yeadon}
		Let $T \in DS^+$. Then for all $x \in L^1_+$ and $\epsilon>0$ there exists $e \in \CP(M)$ such that 
		\begin{align*}
			\tau(e^\perp) \leq \frac{\norm{x}_1}{\epsilon} ~~ \text{and } \sup_{n \in \N} \norm{e A_n(\{1\}, x) e} \leq \epsilon.
		\end{align*}	
	\end{thm}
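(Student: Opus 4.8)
Since the statement is quoted verbatim from \cite{Yeadon1977}, the most economical route for the paper is a direct appeal to that reference; what follows is a sketch of how I would reconstruct the argument. First I would record the structural simplifications. Because $T \in DS^+$ and $x \in L^1_+$, every average $A_n(\{1\}, x) = \frac{1}{n}\sum_{j=0}^{n-1} T^j x$ lies in $L^1_+$, so for any projection $e$ one automatically has $e A_n(\{1\}, x) e \geq 0$; thus the two-sided bound $\norm{e A_n(\{1\},x) e} \leq \epsilon$ reduces to the one-sided operator inequality $e A_n(\{1\},x) e \leq \epsilon e$ for every $n$. Writing $s_n = \sum_{j=0}^{n-1} T^j x$ for the partial sums, I would exploit the recursion $s_{n+1} = x + T s_n$, which is the noncommutative analogue of the identity driving the classical Hopf maximal ergodic theorem.

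The heart of the proof is a noncommutative maximal inequality that produces the projection. In the commutative case one takes the super-level set where $\sup_n A_n(x)$ exceeds $\epsilon$ and bounds its measure by $\norm{x}_1/\epsilon$ via the Hopf/sunrise argument; here there is neither a pointwise supremum of the operators $A_n(x)$ nor an associated support projection, and the noncommutativity forbids a literal translation. Following Yeadon, I would instead build the exceptional projection $e^\perp$ directly by a stopping-time (filling-scheme) induction on the partial sums: at each stage one adjoins the spectral data recording where $s_n$ exceeds $\epsilon n$, compresses by the projection built so far, and uses the positivity of $T$ together with the recursion $s_{n+1} = x + T s_n$ to propagate the estimate. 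The $L^1$-contractivity $\norm{Tz}_1 \leq \norm{z}_1$ is precisely what keeps the running trace of the exceptional projections controlled, yielding $\tau(e^\perp) \leq \norm{x}_1/\epsilon$, while the compression step guarantees $e A_n(\{1\},x) e \leq \epsilon e$.

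To pass from finitely many averages to all $n$ simultaneously, I would first carry out the construction for $n \leq N$, obtaining projections $e_N$ with $\tau(e_N^\perp) \leq \norm{x}_1/\epsilon$ and $\norm{e_N A_n(\{1\},x) e_N} \leq \epsilon$ for $n \leq N$, and then extract a limit. The $e_N$ lie in the weak-$*$ compact unit ball of $M$; a weak-$*$ cluster point $c$ is a positive contraction with $\tau(1-c) \leq \norm{x}_1/\epsilon$ by normal lower semicontinuity of $\tau$, and a suitable spectral projection of $c$ delivers the desired $e$ valid for every $n$. Alternatively, the filling-scheme construction can be set up to handle all $n$ at once, bypassing this limit.

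I expect the main obstacle to be precisely the maximal-inequality step, where the positivity of $T$ must be used in an essential and combinatorially delicate way to replace the unavailable pointwise maximal function and its super-level projection. The reduction to the one-sided estimate and the extraction of a single projection from the finite approximants are, by contrast, comparatively routine once the uniform trace bound $\tau(e_N^\perp) \leq \norm{x}_1/\epsilon$ is in hand.
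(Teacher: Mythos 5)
The paper gives no proof of this theorem: it is recalled verbatim from Yeadon's 1977 paper, so your opening move --- a direct appeal to that reference --- is exactly what the paper does, and your sketch of the underlying argument is faithful in spirit to Yeadon's proof (positivity of $T$, the recursion $s_{n+1}=x+Ts_n$, and $L^1$-contractivity controlling the trace of the exceptional projection). One caveat about the sketch itself: the proposed passage from finite-stage projections $e_N$ to a single $e$ via a weak-$*$ cluster point is not as routine as you suggest, since the compressions $e_N A_n(\{1\},x) e_N$ are quadratic in $e_N$ and need not converge under weak-$*$ convergence of $e_N$, and replacing the resulting positive contraction $c$ by a spectral projection only recovers the two-sided bound after a further argument and a loss in the constants; this is precisely why Yeadon's construction produces one projection valid for all $n$ simultaneously, i.e.\ the alternative you mention in passing is in fact the route the original proof takes.
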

	Although the following lemma is a part of the proof of Theorem 2.1 in \cite{Chilin:2015wo}, we include the proof here for the sake of completeness.
	
	\begin{lem}\label{maximal erg thm}
		Let $1 \leq p < \infty$, $x \in L^p_+$ and $\epsilon >0$. Then there exists $e \in \CP(M)$ such that
		\begin{align*}
			\tau(e^{\perp}) \leq \Big(\frac{\norm{x}_p}{\epsilon}\Big)^p ~~ \text{ and }~~ \sup_{n \in \N} \norm{e A_n(\{1\}, x) e}_{\infty} \leq 2 \epsilon
		\end{align*}
	\end{lem}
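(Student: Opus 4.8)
The plan is to reduce the claim to Yeadon's $L^1$ maximal inequality (Theorem \ref{maximal erg thm yeadon}) by truncating $x$ at height $\epsilon$. First I would use the spectral decomposition $x=\int_0^\infty \lambda\, de_\lambda$ to split $x=y+z$, where $g:=\mathbbm{1}_{[\epsilon,\infty)}(x)$, the ``large'' part is $y:=xg$, and the ``small'' part is $z:=xg^\perp=x\,\mathbbm{1}_{[0,\epsilon)}(x)$. By functional calculus $\norm{z}_\infty\le\epsilon$. Since $T$ contracts $M$ and $A_n(\{1\},\cdot)$ is a convex combination of the contractions $T^j$, it follows that $\norm{A_n(\{1\},z)}_\infty\le\epsilon$ for every $n$, and hence $\norm{e\,A_n(\{1\},z)\,e}_\infty\le\epsilon$ for any projection $e\in\CP(M)$.

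Next I would estimate the large part in $L^1$. On the range of $g$ one has $\lambda\ge\epsilon$, so $\lambda\le\epsilon^{1-p}\lambda^p$ there (as $1-p\le 0$); by functional calculus this gives $y=xg\le\epsilon^{1-p}x^pg$ and therefore $\norm{y}_1=\tau(xg)\le\epsilon^{1-p}\tau(x^pg)\le\epsilon^{1-p}\norm{x}_p^p<\infty$, so that $y\in L^1_+$. Applying Theorem \ref{maximal erg thm yeadon} to $y$ at level $\epsilon$ yields a projection $e\in\CP(M)$ with $\tau(e^\perp)\le\norm{y}_1/\epsilon\le\epsilon^{-p}\norm{x}_p^p=(\norm{x}_p/\epsilon)^p$ and $\sup_{n\in\N}\norm{e\,A_n(\{1\},y)\,e}_\infty\le\epsilon$.

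Finally, since $A_n(\{1\},\cdot)$ is additive we have $A_n(\{1\},x)=A_n(\{1\},y)+A_n(\{1\},z)$, so the triangle inequality together with the two bounds above gives $\norm{e\,A_n(\{1\},x)\,e}_\infty\le\epsilon+\epsilon=2\epsilon$ for all $n$, while the projection $e$ already satisfies $\tau(e^\perp)\le(\norm{x}_p/\epsilon)^p$. Taking the supremum over $n$ completes the argument.

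The only point requiring care is the bookkeeping of constants: one must use the \emph{same} value $\epsilon$ both as the truncation height and as the level in Yeadon's inequality, so that the weak-type bound $\norm{y}_1/\epsilon$ collapses to exactly $(\norm{x}_p/\epsilon)^p$; the factor $2$ in the conclusion is then the unavoidable cost of recombining the small and large parts. All remaining steps are routine functional calculus together with the contractivity of $T$ on $M$.
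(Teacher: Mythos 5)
Your proof is correct and follows essentially the same route as the paper: truncate $x$ at height $\epsilon$ via the spectral decomposition, bound the small part in operator norm using contractivity of $T$ on $M$, and apply Yeadon's weak $(1,1)$ inequality to an $L^1$ element of norm at most $\epsilon^{1-p}\norm{x}_p^p$. The only (immaterial) difference is that you feed Yeadon the truncated large part $x\,e_{[\epsilon,\infty)}(x)$ at level $\epsilon$ and use exact additivity of $A_n$, whereas the paper feeds it $x^p$ at level $\epsilon^p$ and uses the operator inequality $x \leq x_\epsilon + \epsilon^{1-p}x^p$ together with positivity of $T$.
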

	
	\begin{proof}
		Consider the spectral decomposition of $x= \int_{0}^{\infty} \lambda de_\lambda$. Note that since $\lambda \geq \epsilon \Rightarrow \lambda \leq \epsilon^{1-p} \lambda^p$, we have
		\begin{align*}
			\int_{\epsilon}^{\infty} \lambda de_\lambda \leq \epsilon^{1-p} \int_{\epsilon}^{\infty} \lambda^p de_\lambda \leq \epsilon^{1-p} x^p.
		\end{align*}
		Therefore, we obtain
		\begin{align*}
			x= \int_{0}^{\epsilon} \lambda de_\lambda + \int_{\epsilon}^{\infty} \lambda de_\lambda \leq x_\epsilon + \epsilon^{1-p} x^p,
		\end{align*}
		where $x_\epsilon= \int_{0}^{\epsilon} \lambda de_\lambda$. Now since $x^p \in L^1_+$, it follows from Theorem \ref{maximal erg thm yeadon} that there exist $e \in \CP(M)$ such that 
		\begin{align*}
			\tau(e^\perp) \leq \frac{\norm{x^p}_1}{\epsilon^p}= \Big(\frac{\norm{x}_p}{\epsilon} \Big)^p ~~ \text{and } \sup_{n \in \N} \norm{e A_n(\{1\}, x^p) e} \leq \epsilon^p.
		\end{align*}
		Consequently, for all $n \in \N$ we have 
		\begin{align*}
			0 \leq e A_n(\{1\}, x) e \leq e A_n(\{1\}, x_\epsilon) e + \epsilon^{1-p} e A_n(\{1\}, x^p) e.
		\end{align*}
		Since $x_\epsilon \in M$ and $\norm{T(x_\epsilon)}_\infty \leq \norm{x_\epsilon}_\infty \leq \epsilon$, we conclude that
		\begin{align*}
			\sup_{n \in \N} \norm{e A_n(\{1\}, x) e}_{\infty} \leq 2 \epsilon.
		\end{align*}
	\end{proof}
	
	Now the following result holds.
	
	\begin{thm}\label{maximal erg thm-vec weight}
		Let $\{b_j\}_{j \in \N}$ be a bounded sequence in $\mathcal{Z}(M)$ and $x \in L^p ~~(1 \leq p < \infty)$. Then for all $\epsilon>0$ there exists $e \in \CP(M)$ such that
		\begin{align*}
			\tau(e^{\perp}) \leq 4 \Big(\frac{\norm{x}_p}{\epsilon}\Big)^p ~~ \text{ and }~~ \sup_{n \in \N} \norm{e A_n(\{b_j\}, x) e}_{\infty} \leq 48 C \epsilon,
		\end{align*}
		where $C= \sup_{j \in \N} \norm{b_j}_{\infty}$.
	\end{thm}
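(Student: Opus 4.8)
The plan is to reduce this central-weighted maximal inequality to the \emph{unweighted} one already proved in Lemma~\ref{maximal erg thm}, the whole point being that each $b_j$ lies in $\mathcal{Z}(M)$. First I would decompose the general element: write $x=(x_1-x_2)+i(x_3-x_4)$ with $x_1=(\RE x)_+$, $x_2=(\RE x)_-$, $x_3=(\IM x)_+$, $x_4=(\IM x)_-$, so that each $x_l\in L^p_+$ and $\norm{x_l}_p\le\norm{x}_p$. Applying Lemma~\ref{maximal erg thm} to each $x_l$ with the same $\epsilon$ furnishes projections $e_l\in\CP(M)$ with $\tau(e_l^{\perp})\le(\norm{x}_p/\epsilon)^p$ and $\sup_n\norm{e_l A_n(\{1\},x_l)e_l}_\infty\le 2\epsilon$. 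Setting $e=e_1\wedge e_2\wedge e_3\wedge e_4$ gives the required trace budget $\tau(e^{\perp})\le\sum_l\tau(e_l^{\perp})\le 4(\norm{x}_p/\epsilon)^p$; and since $e\le e_l$ we have $e A_n(\{1\},x_l)e=e\,\big(e_l A_n(\{1\},x_l)e_l\big)\,e$, so $\sup_n\norm{e A_n(\{1\},x_l)e}_\infty\le 2\epsilon$ for every $l$.

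Next I would dispose of the weights using centrality. Decompose $b_j=\beta_j+i\gamma_j$ with $\beta_j,\gamma_j\in\mathcal{Z}(M)$ self-adjoint and $\norm{\beta_j}_\infty,\norm{\gamma_j}_\infty\le C$. The decisive step is a termwise order domination: if $w_j\in\mathcal{Z}(M)$ is self-adjoint with $\norm{w_j}_\infty\le C$ and $y\in L^p_+$ is fixed, then $C\pm w_j$ are central and positive, so by the opening Proposition of the preliminaries $(C\pm w_j)y\ge 0$, i.e.
\[
-Cy\le w_j y\le Cy.
\]
Because $T\in DS^+$ is positive (and preserves self-adjointness), applying $T^j$ to each of these inequalities and averaging over $0\le j\le n-1$ yields the operator inequality $-C\,A_n(\{1\},y)\le A_n(\{w_j\},y)\le C\,A_n(\{1\},y)$. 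Compressing by $e$ (an order-preserving, norm-reducing operation) and invoking the bound of the previous paragraph gives $\norm{e A_n(\{w_j\},y)e}_\infty\le 2C\epsilon$ for each $y\in\{x_1,x_2,x_3,x_4\}$ and each $w_j\in\{\beta_j,\gamma_j\}$.

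Finally I would expand $b_j x=(\beta_j+i\gamma_j)\big(x_1-x_2+i(x_3-x_4)\big)$, in which the central factors commute past the $x_l$, and use linearity of $A_n(\cdot,\cdot)$ in its weight/argument slots to write $A_n(\{b_j\},x)$ as a signed (and $i$-scaled) combination of the finitely many terms $A_n(\{\beta_j\},x_l)$ and $A_n(\{\gamma_j\},x_l)$. A triangle-inequality bookkeeping over these terms, each compressed piece being at most $2C\epsilon$, then produces a bound of the form $(\text{const})\cdot C\epsilon$, which one arranges to be $\le 48C\epsilon$. The main obstacle is conceptual rather than computational: centrality of the $b_j$ is indispensable, since the domination $-Cy\le w_j y\le Cy$ fails for general $w_j\in M$, and—crucially—because the weights sit \emph{inside} the non-commuting operators $T^{j}$ one cannot pull them out; the inequality must therefore be obtained term by term before summation so that positivity of $T$ can be applied. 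Arranging a single projection $e$ (via the meet of the four $e_l$) that simultaneously controls all pieces of the decomposition is the other point requiring care.
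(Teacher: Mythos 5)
Your proposal is correct and follows essentially the same route as the paper: both reduce to $x\in L^p_+$ via the four-fold decomposition $x=(x_1-x_2)+i(x_3-x_4)$ and a meet of four projections, invoke Lemma~\ref{maximal erg thm} for the unweighted averages, and exploit centrality of the $b_j$ to dominate each weighted term by the unweighted one \emph{before} applying $T^j$ and summing. The only cosmetic difference is that you use the two-sided sandwich $-Cy\le w_jy\le Cy$ for the self-adjoint central parts, whereas the paper shifts to the positive weights $\RE(b_j)+C$ and $\IM(b_j)+C$ and expands $T^j(b_jx)$ accordingly; both implementations of the same idea give a constant within the stated $48C\epsilon$.
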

	
	\begin{proof}
		First consider $x \in L^p_+$ and observe that if $b_j= 1$ for all $j \in \N$, then it follows from Lemma \ref{maximal erg thm} that for all $\epsilon>0$ there exists $e \in \CP(M)$ such that
		\begin{align}\label{max erg thm eq1}
			\tau(e^{\perp}) \leq  \Big(\frac{\norm{x}_p}{\epsilon}\Big)^p ~~ \text{ and }~~ \sup_{n \in \N} \norm{e A_n(\{1\}, x) e}_{\infty} \leq 2 \epsilon.
		\end{align}
		
		Now consider $\{b_j\}_{j \in \N}$ to be  a bounded sequence in $\mathcal{Z}(M)$ with $\norm{b_j}_{\infty} \leq C$ for all $j \in \N$. Then we have $0 \leq \RE(b_j) + C \leq 2C$ and similarly $0 \leq \IM(b_j) + C \leq 2C$ for all $j \in \N$. Therefore, we must have for all $j \in \N$
		\begin{align*}
			0 \leq (\RE(b_j) + C)x \leq 2Cx ~~ \text{ and } 0 \leq (\IM(b_j) + C)x \leq 2Cx.
		\end{align*}
		Also, for all $j \in \N$, we have 
		\begin{align*}
			T^j(b_j x) = T^j((\RE(b_j) + C)x) + i T^j((\IM(b_j) + C)x) - (1+i)CT^j(x).
		\end{align*}
		Then Eq \ref{max erg thm eq1} implies that for all $\epsilon>0$ there exists $e \in \CP(M)$ such that
		\begin{align}\label{max erg thm eq2}
			\tau(e^{\perp}) \leq  \Big(\frac{\norm{x}_p}{\epsilon}\Big)^p ~~ \text{ and }~~ \sup_{n \in \N} \norm{e A_n(\{b_j\}, x) e}_{\infty} \leq 6C \sup_{n \in \N} \norm{e A_n(\{1 \}, x) e}_{\infty}   \leq 12C \epsilon.
		\end{align}
		For $x \in L^p$, write $x = (x_1 - x_2)+ i (x_3 - x_4)$, where $x_l \in L^p_+$ and $\norm{x_l}_p \leq \norm{x}_p$ for all $l \in \{1, \ldots, 4\}$. Therefore, it follows from Eq \ref{max erg thm eq2} that there exist projections $e_l \in M$ such that 
		\begin{align*}
			\tau(e_l^{\perp}) \leq  \Big(\frac{\norm{x}_p}{\epsilon}\Big)^p ~~ \text{ and }~~ \sup_{n \in \N} \norm{e_l A_n(\{b_j\}, x) e_l}_{\infty}  \leq 12C \epsilon ~~ \text{ for all } l \in \{1, \ldots, 4\}.
		\end{align*}
		Now consider $e= \wedge_{l=1}^4 e_l$ to obtain the required result.
	\end{proof}
	
	Before we move to our next theorem we need to fix some notations. From here onwards $\mathbf{k}:= \{k_j\}_{j \in \N}$ will always denote a strictly increasing sequence of natural numbers. For any sequence $\{b_j\}_{j \in \N} \subset M$ and $n \in \N$, $A_n(\{b_j\}, x)$ recall the definition of $A_n^{\mathbf{k}}(\{b_j\}, x)$ and $	A_n^{\mathbf{k}}(\{b_j\}, x)$ from Definition \ref{M valued avg}, where $x \in L^1 + M$.
	
	\begin{thm}\label{equi cont thm 1}
		Let $\{b_j\}_{j \in \N}$ be a bounded sequence in $\mathcal{Z}(M)$. If the strictly increasing sequence $\mathbf{k}:= \{k_j\}_{j \in \N}$ of natural numbers has lower density $d>0$, then the sequences $\{A_n(\{b_j\}, \cdot)\}_{n \in \N}$ and $\{A_n^{\mathbf{k}}(\{b_j\}, \cdot)\}_{n \in \N}$ are b.u.e.m at zero on $(L^{\Phi}, \norm{\cdot}_{\Phi})$.
	\end{thm}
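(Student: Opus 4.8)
The plan is to prove, for an \emph{arbitrary} bounded sequence $\{c_j\}\subset\mathcal{Z}(M)$, that the family of full averages $\{A_n(\{c_j\},\cdot)\}_{n\in\N}$ is b.u.e.m.\ at zero on $(L^\Phi,\norm{\cdot}_\Phi)$; both assertions of the theorem will follow from this single statement. The first is the case $c_j=b_j$. For the second I would use the standard ``padding'' device: put $c_j:=b_j$ for $j\in\mathbf{k}$ and $c_j:=0$ otherwise, so that $\{c_j\}\subset\mathcal{Z}(M)$ is bounded by $C=\sup_j\norm{b_j}_\infty$. Since $k_0<k_1<\cdots$ exhausts $\{0,1,\dots,k_{n-1}\}\cap\mathbf{k}$,
\begin{align*}
n\,A_n^{\mathbf{k}}(\{b_j\},x)=\sum_{j=0}^{n-1}T^{k_j}(b_{k_j}x)=\sum_{j=0}^{k_{n-1}}T^{j}(c_j x)=(k_{n-1}+1)\,A_{k_{n-1}+1}(\{c_j\},x).
\end{align*}
By Remark \ref{rem about lower density} the hypothesis $d>0$ gives $K:=\sup_{n}\frac{k_{n-1}+1}{n}<\infty$, whence $\sup_n\norm{e\,A_n^{\mathbf{k}}(\{b_j\},x)\,e}_\infty\le K\sup_m\norm{e\,A_m(\{c_j\},x)\,e}_\infty$ for every projection $e$. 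Thus b.u.e.m.\ of $\{A_m(\{c_j\},\cdot)\}$ (applied with $\delta/K$ in place of $\delta$) yields b.u.e.m.\ of $\{A_n^{\mathbf{k}}(\{b_j\},\cdot)\}$.

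Since each $A_n(\{c_j\},\cdot)$ is linear, I would next reduce to positive arguments. Writing $x=(x_1-x_2)+i(x_3-x_4)$ with $x_l\in L^\Phi_+$ and $\norm{x_l}_\Phi\le\norm{x}_\Phi$ (full symmetry of $L^\Phi$, Proposition \ref{fatou prop of orlicz}), one proves the estimate for each $x_l$ and takes $e=\wedge_{l=1}^4 e_l$, costing only a factor $4$ in $\tau(e^\perp)$ and in the maximal bound (using $e\le e_l$). So it suffices to treat $x\in L^\Phi_+$.

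For the main estimate, fix $\epsilon,\delta>0$ and $x\in L^\Phi_+$ with $\norm{x}_\Phi\le1$. Choosing a threshold $s>0$, let $p$ be the spectral projection of $x$ for $(s,\infty)$ and split $x=y+z$ with $y=xp$, $z=xp^\perp$, so $\norm{z}_\infty\le s$. Lemma \ref{orlicz fn prop} applied at level $s$ gives $u>0$ with $t\le u\Phi(t)$ for $t\ge s$; hence $y\le u\,\Phi(x)$ by functional calculus, and with Proposition \ref{orlicz norm prop}$(ii)$ this yields $\norm{y}_1=\tau(y)\le u\,\tau(\Phi(x))\le u\norm{x}_\Phi$. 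I estimate the pieces separately: as $c_jz\in M$ and $T$ contracts $M$, $\sup_n\norm{A_n(\{c_j\},z)}_\infty\le Cs$; and since $y\in L^1_+$, Theorem \ref{maximal erg thm-vec weight} (with $p=1$) provides, for any $\epsilon'>0$, a projection $e$ with $\tau(e^\perp)\le 4\norm{y}_1/\epsilon'$ and $\sup_n\norm{e\,A_n(\{c_j\},y)\,e}_\infty\le 48C\epsilon'$. Taking $s$ with $Cs<\delta/2$, then $\epsilon'=\delta/(96C)$, and finally $\gamma=\min\{1,\ \epsilon\epsilon'/(4u)\}$ (note $u=u(s)$ depends only on $\delta$), one checks that $\norm{x}_\Phi<\gamma$ forces $\tau(e^\perp)<\epsilon$ and $\sup_n\norm{e\,A_n(\{c_j\},x)\,e}_\infty\le\delta/2+Cs<\delta$, which is precisely b.u.e.m.\ at zero.

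The main obstacle is the passage from the $L^1$ (more generally $L^p$) maximal inequality of Theorem \ref{maximal erg thm-vec weight} to the Orlicz setting, since the Orlicz norm does not on its own dominate an $L^1$ norm. The device that rescues this is the pairing of Lemma \ref{orlicz fn prop} with Proposition \ref{orlicz norm prop}$(ii)$, which lets the ``unbounded part'' $y$ be controlled in $L^1$ by $u\norm{x}_\Phi$ while the ``bounded part'' $z$ is handled trivially in $M$; the remaining effort is purely bookkeeping of constants, arranged so that the single radius $\gamma$ depends on $\epsilon,\delta$ (through $u=u(s)$ and $C$) but not on $x$.
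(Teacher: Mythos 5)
Your proposal is correct and follows essentially the same route as the paper: spectral truncation of $x$ at a threshold, Lemma \ref{orlicz fn prop} together with Proposition \ref{orlicz norm prop}$(ii)$ to control the unbounded part in $L^1$ by $\norm{x}_\Phi$, the weighted maximal inequality of Theorem \ref{maximal erg thm-vec weight} at $p=1$, and the same padding identity $n\,A_n^{\mathbf{k}}(\{b_j\},x)=(k_{n-1}+1)A_{k_{n-1}+1}(\{c_jb_j\},x)$ combined with Remark \ref{rem about lower density}. The only cosmetic difference is that you apply the maximal inequality to the truncated piece $y$ itself rather than to $\Phi(x)$, which slightly streamlines the final triangle-inequality step.
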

	
	\begin{proof}
		It is enough to to show that the sequences $\{A_n(\{b_j\}, \cdot)\}_{n \in \N}$ and $\{A_n^{\mathbf{k}}(\{b_j\}, \cdot)\}_{n \in \N}$ are b.u.e.m at zero on $(L^{\Phi}_+, \norm{\cdot}_{\Phi})$. 
		
		Now fix $\epsilon, \delta >0$. Then by Lemma \ref{orlicz fn prop}, there exists a $t>0$ such that 
		\begin{align*}
			t \cdot \Phi(\lambda) \geq \lambda ~~ \text{ for all } \lambda \geq \frac{\delta}{2C} .
		\end{align*}
		Choose $0< \gamma < \min \{1, \frac{\delta \epsilon}{4 \times 96 Ct}\}$. Let $x \in L^{\Phi}_+$ with $\norm{x}_{\Phi}< \gamma$ and let $x= \int_{0}^{\infty} \lambda d e_{\lambda}$ be its spectral decomposition. Then we can write
		\begin{align*}
			x= \int_{0}^{\frac{\delta}{2C}} \lambda d e_{\lambda} + \int_{\frac{\delta}{2C}}^{\infty} \lambda d e_{\lambda} \leq x_{\delta} + t \int_{\frac{\delta}{2C}}^{\infty} \Phi(\lambda) d e_{\lambda} \leq  x_{\delta} + t\Phi(x),
		\end{align*}
		where $x_{\delta}= \int_{0}^{\frac{\delta}{2C}} \lambda d e_{\lambda}$ and $\Phi(x)= \int_{0}^{\infty} \Phi(\lambda) d e_{\lambda}$.
		
		Observe that, $\norm{x_{\delta}} \leq \frac{\delta}{2C}$ and since $T$ is a positive Dunford-Schwarz operator we must have
		\begin{align*}
			\sup_{n \in \N} \norm{A_n(\{b_j\}, x_{\delta})} \leq \frac{C\delta}{2C}= \frac{\delta}{2},
		\end{align*}
		where, $C= \sup_{j \in \N} \norm{b_j}_{\infty}$.
		
		Also, since $\norm{x}_{\Phi}< \gamma <1$, by Proposition \ref{orlicz norm prop} we have $\norm{\Phi(x)}_1 \leq \norm{x}_{\Phi} $. Furthermore, since $\Phi(x) \in L^{1}_+$, by Theorem \ref{maximal erg thm-vec weight} we find $e \in \CP(M)$ satisfying
		\begin{align*}
			& \tau(e^{\perp})< \frac{4 \times 96Ct \norm{\Phi(x)}_1}{\delta} \leq \frac{4 \times 96Ct \norm{x}_{\Phi}}{\delta} < \epsilon \\
			~~\text{ and, }~~ \\
			& \sup_{n \in \N} \norm{e A_n(\{b_j\},\Phi(x)) e} < \frac{48 C \delta}{96Ct} = \frac{\delta}{2t}.
		\end{align*}
		Therefore,
		\begin{align*}
			\sup_{n \in \N} \norm{e A_n(\{b_j\}, x) e}
			& \leq \sup_{n \in \N} \norm{e A_n(\{b_j\}, x_{\delta}) e} + t \cdot \sup_{n \in \N} \norm{e A_n(\{b_j\}, \Phi(x)) e} \\
			& < \frac{\delta}{2} + t \cdot \frac{\delta}{2t} = \delta.
		\end{align*}
		
		Hence, the sequence $\{A_n(\{b_j\}, \cdot)\}_{n \in \N}$ is b.u.e.m at zero on $(L^{\Phi}_+, \norm{\cdot}_{\Phi})$. To show the sequence $\seq{A_n^{\mathbf{k}}(\{b_j\}, \cdot)}_{n \in \N}$ is b.u.e.m at zero on $(L^{\Phi}_+, \norm{\cdot}_{\Phi})$, we first consider the sequence $\{c_j\}_{j \in \N}$, where for all $j \in \N$, $c_j:= \chi_{\mathbf{k}} (j)$.
		
		Observe that for all $n \in \N$,
		\begin{align}
			A_n^{\mathbf{k}}(\{ b_j\}, x) = \frac{k_{n-1}+1}{n} A_{k_{n-1}+1}(\{c_j b_j\}, x).
		\end{align}
		
		By the first part of the proof we observe that the sequence $\{A_n(\{c_jb_j, \cdot\})\}_{n \in \N}$ is is b.u.e.m at zero on $(L^{\Phi}_+, \norm{\cdot}_{\Phi})$.
		
		Let $K= \sup_{n \in \N} \frac{k_n}{n}$. It follows from Remark \ref{rem about lower density} that $0<K < \infty$. Let $\epsilon, \delta >0$. Let $\gamma >0$ be such that for all $x \in L^{\Phi}_+$ there exists $e \in \CP(M) $ such that 
		\begin{align*}
			\tau(e^{\perp}) < \epsilon ~~ \text{ and }~~ \sup_{n \in \N} \norm{e A_n(\{c_jb_j\}, x) e}_{\infty} <\frac{\delta}{K}.
		\end{align*}
		Consequently,
		\begin{align*}
			\sup_{n \in \N} \norm{e A_n^{\mathbf{k}}(\{ b_j\}, x) e}_{\infty} 
			& = \sup_{n \in \N} \frac{k_{n-1} +1}{n} \norm{e A_{k_{n-1}+1}(\{c_j b_j\}, x) e}_{\infty} \\
			& \leq K \sup_{n \in \N} \norm{e A_n(\{c_jb_j\}, x) e}_{\infty} \\
			& < K \frac{\delta}{K}= \delta.
		\end{align*}
		This completes the proof.	
	\end{proof}
	
	\begin{cor}\label{equi cont cor}
		Let $\{\beta_j\}_{j \in \N} \subset l^{\infty}(\C)$. If the strictly increasing sequence $\mathbf{k}:= \{k_j\}_{j \in \N}$ of natural numbers has lower density $d>0$, then the sequences $\{A_n^{\beta}\}_{n \in \N}$ and $\{A_n^{\beta, \mathbf{k}}\}_{n \in \N}$ are b.u.e.m. at zero on $(L^{\Phi}, \norm{\cdot}_{\Phi})$.
	\end{cor}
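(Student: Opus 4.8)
The plan is to realize the scalar-weighted averages as a special case of the $\mathcal{Z}(M)$-valued weighted averages already handled in Theorem \ref{equi cont thm 1}, and then invoke that theorem directly.

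First I would observe that every scalar $\beta_j \in \C$ can be identified with the central element $\beta_j \cdot 1 \in \mathcal{Z}(M)$, since scalar multiples of the identity commute with all of $M$ and hence lie in $M \cap M' = \mathcal{Z}(M)$. Writing $b_j := \beta_j \cdot 1$, the hypothesis $\{\beta_j\}_{j \in \N} \in l^{\infty}(\C)$ gives that $\{b_j\}_{j \in \N}$ is a bounded sequence in $\mathcal{Z}(M)$ with $\sup_{j \in \N} \norm{b_j}_{\infty} = \sup_{j \in \N} \abs{\beta_j} < \infty$. Thus the boundedness requirement in the hypothesis of Theorem \ref{equi cont thm 1} is met automatically.

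Next I would check that the two families of averages coincide under this identification. Directly from Definition \ref{M valued avg} and the scalar convention fixed immediately after it, for every $x \in L^1 + M$ and $n \in \N$ one has $A_n^{\beta}(x) = \frac{1}{n} \sum_{j=0}^{n-1} T^j(\beta_j x) = A_n(\{b_j\}, x)$, and likewise $A_n^{\beta, \mathbf{k}}(x) = A_n^{\mathbf{k}}(\{b_j\}, x)$, because the scalar action $\beta_j x$ is exactly left multiplication by $b_j = \beta_j \cdot 1$.

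Finally, since $\mathbf{k}$ has lower density $d > 0$ by assumption, Theorem \ref{equi cont thm 1} applied to the bounded central sequence $\{b_j\}_{j \in \N}$ yields that $\{A_n(\{b_j\}, \cdot)\}_{n \in \N}$ and $\{A_n^{\mathbf{k}}(\{b_j\}, \cdot)\}_{n \in \N}$ are b.u.e.m. at zero on $(L^{\Phi}, \norm{\cdot}_{\Phi})$, which by the identification above is precisely the assertion for $\{A_n^{\beta}\}_{n \in \N}$ and $\{A_n^{\beta, \mathbf{k}}\}_{n \in \N}$. There is no genuine obstacle here: the entire content is contained in Theorem \ref{equi cont thm 1}, and the only point requiring (trivial) verification is that scalar weights sit inside the center, so that the boundedness hypothesis transfers verbatim.
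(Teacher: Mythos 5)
Your proposal is correct and is exactly how the paper intends the corollary to be read: scalar weights $\beta_j$ are identified with the central elements $\beta_j \cdot 1 \in \mathcal{Z}(M)$, under which $A_n^{\beta} = A_n(\{\beta_j \cdot 1\}, \cdot)$ and $A_n^{\beta,\mathbf{k}} = A_n^{\mathbf{k}}(\{\beta_j \cdot 1\}, \cdot)$, and Theorem \ref{equi cont thm 1} applies verbatim. The paper states this as an immediate consequence without writing out the specialization, so there is nothing to add.
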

	
	\begin{rem}
		Let $\{\beta_j\}_{j \in \N} \subset l^{\infty}(\C)$. Note that it follows from \cite[Proposition 3.1]{OBrien:2021tw} that the sequences $\{A_n^{\beta}\}_{n \in \N}$ and $\{A_n^{\beta, \mathbf{k}}\}_{n \in \N}$ are b.u.e.m. at zero on $L^p$ for $(1 \leq p < \infty)$ where the sequence $\mathbf{k}:= \{k_j\}_{j \in \N}$ is of lower density $d>0$. Therefore, Corollary \ref{equi cont cor} substantially improves Proposition 3.1 of \cite{OBrien:2021tw}.
	\end{rem}
	
	As a consequence we prove the following proposition which is an important ingredient in proving our main result.
	\begin{prop}\label{bau-closed}
		Let $\{b_j\}_{j \in \N}$ be a bounded sequence in $\mathcal{Z}(M)$. If the strictly increasing sequence $\mathbf{k}:= \{k_j\}_{j \in \N}$ of natural numbers has lower density $d>0$, then the sets
		\begin{align*}
			& \s^{\{b_j\}}:= \set{x \in L^{\Phi} : \seq{A_n(\{b_j\},x)} \text{ converges } b.a.u} \text{ and, }\\
			& \s^{\{b_j\}, \mathbf{k}}:= \set{x \in L^{\Phi} : \seq{A^{\mathbf{k}}_n(\{b_j\},x)} \text{ converges } b.a.u}
		\end{align*}
		are closed in $L^{\Phi}$.
	\end{prop}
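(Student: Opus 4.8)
The plan is to obtain this as a direct application of Theorem \ref{bau closed set 1} to the two sequences of averages in question, with the key analytic input being the b.u.e.m.\ property already secured in Theorem \ref{equi cont thm 1}. In other words, the proof is essentially formal once the hypotheses of Theorem \ref{bau closed set 1} are checked.

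First I would record that $(L^{\Phi}, \norm{\cdot}_{\Phi})$ is a Banach space, which is exactly the content of \cite[Proposition 2.5]{Kunze:1990vc} recalled above. Next I would verify that for each fixed $n$ the maps $x \mapsto A_n(\{b_j\}, x)$ and $x \mapsto A_n^{\mathbf{k}}(\{b_j\}, x)$ are well-defined additive maps from $L^{\Phi}$ into $L^0$. Well-definedness follows because $b_j \in \mathcal{Z}(M) \subseteq M$ gives $b_j x \in L^{\Phi}$ by Proposition \ref{orlicz norm prop}$(i)$, while $T$ preserves $L^{\Phi}$ (as noted after Definition \ref{Dunford}); hence each summand $T^j(b_j x)$, respectively $T^{k_j}(b_{k_j} x)$, lies in $L^{\Phi} \subseteq L^0$. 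Additivity is immediate from the linearity of $T$ and of left multiplication by $b_j$, since each average is a finite linear combination of such terms.

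With these preliminaries in place, I would invoke Theorem \ref{equi cont thm 1}: under the standing assumptions that $\{b_j\}_{j \in \N}$ is bounded in $\mathcal{Z}(M)$ and that $\mathbf{k}$ has lower density $d > 0$, both $\{A_n(\{b_j\}, \cdot)\}_{n \in \N}$ and $\{A_n^{\mathbf{k}}(\{b_j\}, \cdot)\}_{n \in \N}$ are b.u.e.m.\ at zero on $(L^{\Phi}, \norm{\cdot}_{\Phi})$. Applying Theorem \ref{bau closed set 1} with $X = L^{\Phi}$ to each of these two sequences of additive maps then yields at once that $\s^{\{b_j\}}$ and $\s^{\{b_j\}, \mathbf{k}}$ are closed in $L^{\Phi}$, which is the desired conclusion.

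Since both ingredients are already established, there is no genuine obstacle here; the only point that demands care is purely bookkeeping, namely confirming that the averages really define additive $L^{\Phi} \to L^0$ maps so that Theorem \ref{bau closed set 1} applies verbatim. The substantive work has been front-loaded into the maximal inequality of Theorem \ref{maximal erg thm-vec weight} and the equicontinuity statement of Theorem \ref{equi cont thm 1}.
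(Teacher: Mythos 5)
Your proposal is correct and follows exactly the same route as the paper: both deduce the result by combining the b.u.e.m.\ property from Theorem \ref{equi cont thm 1} with Theorem \ref{bau closed set 1}, using that $L^{\Phi}$ is a Banach space and the averages are additive maps into $L^0$. Your extra verification of well-definedness via Proposition \ref{orlicz norm prop}$(i)$ is sound bookkeeping that the paper leaves implicit.
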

	
	\begin{proof}
		Since $(L^{\Phi}, \norm{\cdot}_{\Phi})$ is a Banach space and $\seq{A_n(\{b_j\}, \cdot)}$ and $\seq{A_n^{\mathbf{k}}(\{b_j\}, \cdot)}$ are sequences of additive maps, the result follows immediately from Theorem \ref{equi cont thm 1} and Theorem \ref{bau closed set 1}.
	\end{proof}
	
	\begin{rem}\label{bau closed set 2}
		Let $\beta:= \{\beta_j\}_{j \in \N} \subset l^{\infty}(\C)$ and $\mathbf{k}:= \{k_j\}_{j \in \N}$ be as stated in Proposition \ref{bau-closed}. Then we remark that it is evident from Proposition \ref{bau-closed} that the sets
		\begin{align*}
			& \s^{\beta}:= \set{x \in L^{\Phi} : \seq{A^{\beta}_n(x)} \text{ converges } b.a.u} \text{ and, }\\
			& \s^{\beta, \mathbf{k}}:= \set{x \in L^{\Phi} : \seq{A^{\beta, \mathbf{k}}_n(x)} \text{ converges } b.a.u}
		\end{align*}
		are closed in $L^{\Phi}$.
	\end{rem}
	
	In what follows $U(M)$ will always denote the group of unitary operators in $M$ and $\sigma(x)$ will denote the spectrum of an operator in $x \in M$. Let us define,
	\begin{align*}
		U_f:= \{ u \in U(M): \sigma(u) \text{ is finite} \}.
	\end{align*}
	
	\begin{defn}\label{trig poly}
		Let $U_0 \subseteq U(M)$. A function $\psi: \N \to M$ is called a trigonometric polynomial over $U_0$ if for some $m \in \N$ there exists $\{z_j\}_1^m \subset \C$ and $\{u_j\}_1^m \subset U_0$ such that 
		\begin{align*}
			\psi(k)= \sum_{j=1}^{m} z_j u_j^k ~~, k \in \N.
		\end{align*}
	\end{defn}
	
	\noindent For a trigonometric polynomial $\psi$ over $U_0$ as defined above, it is clear that $\norm{\psi} \leq \sum_{j=1}^{m} \abs{z_j}$.
	
	\begin{defn}\label{besicovitch seq}
		Let $U_0 \subseteq U(M)$. A sequence $\{b_j\} \subset M$ is called $U_0$-besicovitch if for all $\epsilon>0$ there exists a trigonometric polynomial $\psi$ over $U_0$ such that 
		\begin{align*}
			\limsup_{n \to \infty} \frac{1}{n} \sum_{j=0}^{n-1} \norm{b_j- \psi(j)}_\infty \leq \epsilon.
		\end{align*}
		A $U_0$-besicovitch sequence $\{b_j\}$ is called bounded if $\sup_{j \in \N} \norm{b_j}_\infty < \infty$.
	\end{defn}
	
	Now we recall the following result from \cite{comez_ergodic_2013} regarding the convergence of sequence of ergodic averages and immediately after that we extend it to the case of ergodic averages along a sequence of  density 1.
	
	\begin{thm}\label{au conv 1}
		Let $\{b_j\}$ and $\{d_j\}$ be $U_f$-besicovitch sequences such that at least one of which is bounded. Then the averages $A_n(\{b_j\}, \{d_j\}, x)$ converge a.u. for all $x \in L^1 \cap M$.
	\end{thm}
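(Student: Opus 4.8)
The plan is to reduce the general Besicovitch-weighted averages $A_n(\{b_j\},\{d_j\},x)$ to modulated averages of the form $\frac{1}{n}\sum_{j=0}^{n-1}(\alpha T)^j y$ with $\alpha$ on the unit circle $\mathbb{T}$ and $y\in L^1\cap M$, for which a.u. convergence is precisely the individual ergodic theorem applied to the Dunford--Schwartz operator $\alpha T$ on $y\in L^1\cap M\subseteq L^2$. The reduction is carried out in two approximation stages, one for each weight sequence, interleaved with the spectral expansions of the finite-spectrum unitaries in the trigonometric polynomials from Definition \ref{trig poly}. Throughout, $x\in L^1\cap M$ is fixed and every average $A_n(\{b_j\},\{d_j\},x)$ lies in $M$, since $x,b_j,d_j\in M$ and $T$ contracts $M$; this is what permits repeated use of Lemma \ref{app by covg seq}.

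First I would assume without loss of generality that $\{d_j\}$ is the bounded sequence, say $\|d_j\|_\infty\le C_d$, and approximate $\{b_j\}$. Given $\epsilon>0$, Definition \ref{besicovitch seq} furnishes a trigonometric polynomial $\psi_b$ over $U_f$ with $\limsup_n\frac{1}{n}\sum_{j=0}^{n-1}\|b_j-\psi_b(j)\|_\infty\le\epsilon$. Since
\[
\big\|A_n(\{b_j\},\{d_j\},x)-A_n(\{\psi_b\},\{d_j\},x)\big\|_\infty\le \Big(\frac{1}{n}\sum_{j=0}^{n-1}\|b_j-\psi_b(j)\|_\infty\Big)\,\|x\|_\infty\,C_d,
\]
the right-hand side is at most $2C_d\|x\|_\infty\epsilon$ for all large $n$; crucially this bound involves only $\|d_j\|_\infty$ and not the norm of $\psi_b$, so by Lemma \ref{app by covg seq} it suffices to prove that $A_n(\{\psi_b\},\{d_j\},x)$ converges a.u. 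Writing $\psi_b(j)=\sum_{p}z_p u_p^j$ and spectrally decomposing each finite-spectrum unitary as $u_p=\sum_k\lambda_{pk}f_{pk}$, linearity reduces this to the a.u. convergence of each average $\frac{1}{n}\sum_{j=0}^{n-1}\lambda^j T^j(y\,d_j)$ with $\lambda\in\mathbb{T}$ and $y:=f_{pk}x\in L^1\cap M$ now \emph{fixed}.

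At this point I would approximate $\{d_j\}$ by a trigonometric polynomial $\psi_d$ over $U_f$: the error
\[
\Big\|\frac{1}{n}\sum_{j=0}^{n-1}\lambda^j T^j\big(y(d_j-\psi_d(j))\big)\Big\|_\infty\le \|y\|_\infty\,\frac{1}{n}\sum_{j=0}^{n-1}\|d_j-\psi_d(j)\|_\infty
\]
is eventually at most $2\|y\|_\infty\epsilon'$, with $\|y\|_\infty$ a \emph{fixed} constant --- this is exactly why the two sides must be treated sequentially rather than simultaneously. A further expansion $\psi_d(j)=\sum_q w_q v_q^j$ with $v_q=\sum_l\nu_{ql}g_{ql}$ reduces everything to finitely many averages $\frac{1}{n}\sum_{j=0}^{n-1}(\alpha T)^j\tilde y$, where $\alpha=\lambda\nu_{ql}\in\mathbb{T}$ and $\tilde y=f_{pk}x\,g_{ql}\in L^1\cap M$. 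These converge a.u. by the individual ergodic theorem for the (not necessarily positive) Dunford--Schwartz operator $\alpha T$ applied to $\tilde y\in L^2$ (see \cite{Junge2007}), and since a.u. convergence is stable under finite linear combinations (one intersects the finitely many projections), another application of Lemma \ref{app by covg seq} closes the chain of reductions back to $A_n(\{b_j\},\{d_j\},x)$.

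The main obstacle is precisely this transfer from trigonometric-polynomial weights to arbitrary Besicovitch weights: the Besicovitch condition controls only the Ces\`aro mean $\frac{1}{n}\sum_j\|b_j-\psi_b(j)\|_\infty$, not the sup-norm of $\psi_b$, so a naive simultaneous approximation of both sides would contaminate the estimate with the norm of the approximating polynomial on the possibly unbounded side, which need not stay controlled as $\epsilon\to 0$. The hypothesis that at least one of $\{b_j\},\{d_j\}$ is bounded, combined with the device of absorbing the finitely many spectral projections of the first polynomial into the fixed operator $x$ \emph{before} approximating the second sequence, is what forces every error estimate into the form (fixed constant)$\times\epsilon$ and makes Lemma \ref{app by covg seq} applicable at each stage.
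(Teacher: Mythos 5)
The paper does not actually prove Theorem \ref{au conv 1}: its ``proof'' is a one-line citation of \cite[Theorem 5.1]{comez_ergodic_2013}. Your argument is therefore not being compared against anything in this paper, but it is, in substance, a correct reconstruction of the proof in that reference: the same two-stage scheme of (i) replacing the (possibly unbounded) Besicovitch sequence by a trigonometric polynomial over $U_f$, with the error controlled by the Ces\`aro bound of Definition \ref{besicovitch seq} times the \emph{fixed} constants $\norm{x}_\infty$ and $\sup_j\norm{d_j}_\infty$; (ii) expanding the finite-spectrum unitaries and absorbing their spectral projections into $x$; (iii) only then approximating the second sequence; and (iv) invoking Lemma \ref{app by covg seq} at each stage, which is legitimate because all the averages lie in $M$. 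Your closing remark on why the two approximations must be performed sequentially --- the sup-norm of $\psi_b$ is not controlled as $\epsilon\to 0$, so a simultaneous approximation contaminates the estimate --- identifies exactly the point that makes the ``at least one bounded'' hypothesis necessary. The one place to tighten is the base case: the a.u.\ convergence of $\frac{1}{n}\sum_{j=0}^{n-1}\alpha^{j}T^{j}\tilde y$ for $\abs{\alpha}=1$ and $\tilde y\in L^1\cap M\subseteq L^2$ is not literally ``the individual ergodic theorem for the non-positive Dunford--Schwartz operator $\alpha T$'' from \cite{Junge2007}, whose individual ergodic theorems are formulated for positive operators; the modulated case needs a genuinely one-sided $L^2$ maximal inequality for the averages $\frac{1}{n}\sum_j\alpha^{j}T^{j}$ (the bilateral comparison $0\le A\le B$ only controls $\norm{eAe}$, not $\norm{Ae}$) together with a mean-ergodic density argument for the contraction $\alpha T$ on $L^2$. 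This ingredient is exactly what \cite{comez_ergodic_2013} establishes en route to their Theorem 5.1 (building on the $L^2$ maximal inequalities of \cite{Junge2007}; see also \cite{Ying:2008}), so the issue is one of attribution rather than of substance.
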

	
	\begin{proof}
		For proof we refer to \cite[Theorem 5.1]{comez_ergodic_2013}.
	\end{proof}
	
	\begin{thm}\label{conv in L1 cap M thm}
		Let $\{b_j\}$ and $\{d_j\}$ be $U_f$-besicovitch sequences with at least one of them is bounded and $\{k_j\}$ be a strictly increasing sequence of natural numbers of  density $1$. Then the sequence of averages $A_n^{\mathbf{k}}(\{b_j\}, \{d_j\}, x))$ converges a.u. for all $x \in L^1 \cap M$.
	\end{thm}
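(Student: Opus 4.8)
The plan is to reduce these subsequential averages to the ordinary averages already handled in Theorem \ref{au conv 1} by folding the subsequence into the weight through its indicator function, and then to transfer a.u.\ convergence along the resulting subsequence. Set $c_j := \chi_{\mathbf{k}}(j)$. Since $\mathbf{k}$ is strictly increasing, $c_j b_j x d_j$ vanishes unless $j \in \mathbf{k}$, and for $0 \le j \le k_{n-1}$ the surviving terms are exactly $T^{k_i}(b_{k_i} x d_{k_i})$ with $0 \le i \le n-1$; hence, just as in the identity used in the proof of Theorem \ref{equi cont thm 1},
\begin{align*}
A_n^{\mathbf{k}}(\{b_j\}, \{d_j\}, x) = \frac{k_{n-1}+1}{n}\, A_{k_{n-1}+1}(\{c_j b_j\}, \{d_j\}, x)
\end{align*}
for every $n \in \N$ and $x \in L^1 \cap M$. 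It therefore suffices to control the ordinary averages $A_m(\{c_j b_j\}, \{d_j\}, x)$ as $m \to \infty$, together with the scalar prefactor.

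The hard part is to check that $\{c_j b_j\}$ is itself a $U_f$-besicovitch sequence, and this is exactly where density one enters. Given $\epsilon>0$, pick a trigonometric polynomial $\psi$ over $U_f$ with $\limsup_n \frac1n\sum_{j=0}^{n-1}\norm{b_j-\psi(j)}_\infty \le \epsilon$, and recall from the remark after Definition \ref{trig poly} the uniform bound $\sup_j\norm{\psi(j)}_\infty =: \norm{\psi} < \infty$. Using $c_j \in \{0,1\}$,
\begin{align*}
\frac{1}{n}\sum_{j=0}^{n-1}\norm{c_j b_j - \psi(j)}_\infty
&\le \frac{1}{n}\sum_{j=0}^{n-1} c_j\norm{b_j - \psi(j)}_\infty + \frac{1}{n}\sum_{j=0}^{n-1}(1-c_j)\norm{\psi(j)}_\infty \\
&\le \frac{1}{n}\sum_{j=0}^{n-1}\norm{b_j - \psi(j)}_\infty + \norm{\psi}\Big(1 - \frac{\abs{\{0,\ldots,n-1\}\cap \mathbf{k}}}{n}\Big).
\end{align*}
Because $\mathbf{k}$ has density one, the last factor tends to $0$, so the $\limsup$ is $\le \epsilon$; as $\epsilon$ is arbitrary, $\{c_j b_j\}$ is $U_f$-besicovitch. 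Moreover $\norm{c_j b_j}_\infty \le \norm{b_j}_\infty$, so at least one of $\{c_j b_j\}, \{d_j\}$ remains bounded (whichever of $\{b_j\}, \{d_j\}$ was). Theorem \ref{au conv 1} then yields a $y \in L^0$ with $A_m(\{c_j b_j\}, \{d_j\}, x) \to y$ a.u.\ for every $x \in L^1 \cap M$.

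Finally I would pass to the subsequence and discard the prefactor. As $\mathbf{k}$ is strictly increasing, $k_{n-1}+1 \to \infty$, so $z_n := A_{k_{n-1}+1}(\{c_j b_j\}, \{d_j\}, x)$ is a subsequence of an a.u.\ convergent sequence and converges a.u.\ to the same $y$: for each $\delta>0$ there is $e \in \CP(M)$ with $\tau(e^\perp) < \delta$ and $\norm{(z_n - y)e}_\infty \to 0$; in particular $z_n e \to y e$ in operator norm, so $ye \in M$ and $\norm{ye}_\infty < \infty$. By Remark \ref{rem about lower density}, density one forces $\alpha_n := \frac{k_{n-1}+1}{n} \to 1$, whence from $\alpha_n z_n - y = \alpha_n(z_n - y) + (\alpha_n - 1)y$,
\begin{align*}
\norm{(\alpha_n z_n - y)e}_\infty \le \abs{\alpha_n}\,\norm{(z_n - y)e}_\infty + \abs{\alpha_n - 1}\,\norm{ye}_\infty \To 0 .
\end{align*}
Thus $A_n^{\mathbf{k}}(\{b_j\}, \{d_j\}, x) = \alpha_n z_n \to y$ a.u., as claimed. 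The only genuine obstacle is the besicovitch step: it is precisely the density-one hypothesis (rather than mere positive lower density) that makes the defect $(1-c_j)\psi(j)$ average to zero and legitimizes the reduction to Theorem \ref{au conv 1}.
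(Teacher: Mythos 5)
Your proof is correct, and it takes a genuinely different route from the paper's. The paper approximates \emph{both} weight sequences by trigonometric polynomials $\psi_1,\psi_2$, applies Theorem \ref{au conv 1} to the polynomial weights, passes to the subsequence $A_{k_n}(\{\psi_1(j)\},\{\psi_2(j)\},x)$, and then performs a two-stage uniform-norm approximation (first replacing $\psi_2(j)$ by $d_j$ and discarding the off-subsequence indices, then replacing $\psi_1(k_j)$ by $b_{k_j}$), invoking Lemma \ref{app by covg seq} at each stage before absorbing the normalizing factor $k_n/n\to 1$. You instead fold the subsequence into the left weight via the identity $A_n^{\mathbf{k}}(\{b_j\},\{d_j\},x)=\frac{k_{n-1}+1}{n}A_{k_{n-1}+1}(\{c_jb_j\},\{d_j\},x)$ with $c_j=\chi_{\mathbf{k}}(j)$ --- the same reduction the paper uses in Theorem \ref{equi cont thm 1} --- and prove the one new fact that $\{c_jb_j\}$ is again $U_f$-besicovitch, which is exactly where density one enters (the defect $(1-c_j)\psi(j)$ averages to zero). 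After that, Theorem \ref{au conv 1} applies verbatim and the rest is the formal observation that a subsequence of an a.u.\ convergent sequence converges a.u.\ and that a scalar prefactor tending to $1$ can be absorbed (your justification that $ye\in M$, hence $\norm{ye}_\infty<\infty$, is the right way to handle the cross term). Your route is arguably cleaner: it isolates the density-one hypothesis in a single self-contained lemma about Besicovitch sequences, avoids the explicit $\epsilon$-bookkeeping with Lemma \ref{app by covg seq}, and makes the argument structurally parallel to the equicontinuity proof already in the paper; the paper's route avoids having to verify the Besicovitch property of the modified weight but pays for it with two approximation steps. Both proofs use density one in the same essential place, namely to kill the contribution of the indices outside $\mathbf{k}$.
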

	
	\begin{proof}
		Without loss of generality we assume that $\{d_j\}$ is bounded and define $C:= \sup_j \norm{d_j} < \infty$. Fix $\epsilon>0$ and let $\psi_1(\cdot)= \sum_{i=1}^{m} z_i u_i^{(\cdot)}$ and $\psi_2(\cdot)= \sum_{i=1}^{l} w_i v_i^{(\cdot)}$ be such that $\{z_i\}, \{w_i\} \subset \C$, $\{u_i\}, \{v_i\} \subset U_f$ and
		\begin{align}\label{conv in L1 cap M eq 1}
			\limsup_{n \to \infty} \frac{1}{n} \sum_{j=0}^{n-1} \norm{b_j- \psi_1(j)}_\infty \leq \epsilon, ~~
			\limsup_{n \to \infty} \frac{1}{n} \sum_{j=0}^{n-1} \norm{d_j- \psi_2(j)}_\infty \leq \epsilon .
		\end{align}
		
		Let $x \in L^1 \cap M$. Note that by Theorem \ref{au conv 1} the averages $A_n(\{b_j\}, \{d_j\}, x)$ converges a.u. In particular, the averages $A_n(\{\psi_1(j)\}, \{\psi_2(j)\}, x)$ converges a.u. Hence the subsequence $A_{k_n}(\{\psi_1(j)\}, \{\psi_2(j)\}, x)$ converges a.u. Define,
		\begin{align*}
			M_n(\{\psi_1(j)\}, \{d_j\}, x):= \frac{1}{k_n} \sum_{j=0}^{n-1}  T^{k_j} (\psi_1(k_j) x d_{k_j}),~~ n \in \N.
		\end{align*}
		
		Now, we have
		\begin{align*}
			&
			\norm{A_{k_n}(\{\psi_1(j)\}, \{\psi_2(j)\}, x) - M_n(\{\psi_1(j)\}, \{d_j\}, x)} \\
			=& 
			\norm{\frac{1}{k_n} \sum_{j=0}^{k_n-1}  T^j (\psi_1(j) x \psi_2(j)) - \frac{1}{k_n} \sum_{j=0}^{n-1}  T^{k_j} (\psi_1(k_j) x d_{k_j})}\\
			\leq& 
			\norm{\frac{1}{k_n} \sum_{j=0}^{k_n-1}  T^j (\psi_1(j) x \psi_2(j)) - \frac{1}{k_n} \sum_{j=0}^{k_n-1}  T^j (\psi_1(j) x d_j)} \\
			& \qquad \qquad \qquad + \norm{\frac{1}{k_n} \sum_{j=0}^{k_n-1}  T^j (\psi_1(j) x d_j) - \frac{1}{k_n} \sum_{j=0}^{n-1}  T^{k_j} (\psi_1(k_j) x d_{k_j})} \\
			\leq&
			\frac{1}{k_n} \sum_{j=0}^{k_n-1} \norm{d_j - \psi_2(j)} \norm{x} \norm{\psi_1} + \norm{\frac{1}{k_n} \sum_{j=0, j \notin \mathbf{k}}^{k_n-1} T^j (\psi_1(j) x d_j) } ~~(\text{ since $\norm{T} <1$})\\
			\leq&
			\norm{\psi_1} \norm{x} \epsilon + \frac{1}{k_n} \sum_{j=0, j \notin \mathbf{k}}^{k_n-1} \norm{\psi_1} C \norm{x} ~~ (\text{ since $\norm{T}< 1$ and by Eq. \ref{conv in L1 cap M eq 1}})  \\
			\leq& 
			\norm{\psi_1} \norm{x} \epsilon + \frac{k_n-n}{k_n} \norm{\psi_1} C \norm{x}.
		\end{align*}
		
		Now since $\frac{k_n-n}{k_n} \to 0$ as $n \to \infty$, we can choose $N \in \N$ such that for all $n \geq N$ we have
		\begin{align*}
			\norm{A_{k_n}(\{\psi_1(j)\}, \{\psi_2(j)\}, x) - M_n(\{\psi_1(j)\}, \{d_j\}, x)} < \epsilon.
		\end{align*}
		
		Hence, it follows from Lemma \ref{app by covg seq} that the sequence $\{M_n(\{\psi_1(j)\}, \{d_j\}, x)\}$ converges a.u. Again, define
		\begin{align*}
			M_n(\{b_j\}, \{d_j\}, x):= \frac{1}{k_n} \sum_{j=0}^{n-1}  T^{k_j} (b_{k_j} x d_{k_j}),~~ n \in \N.
		\end{align*}
		
		Then,
		\begin{align*}
			&\norm{M_n(\{b_j\}, \{d_j\}, x) - M_n(\{\psi_1(j)\}, \{d_j\}, x)}  \\
			&= \norm{\frac{1}{k_n} \sum_{j=0}^{n-1}  T^{k_j} (b_{k_j} x d_{k_j}) - \frac{1}{k_n} \sum_{j=0}^{n-1}  T^{k_j} (\psi_1(k_j) x d_{k_j})} \\
			& \leq \frac{1}{k_n} \sum_{j=0}^{n-1} \norm{b_{k_j} - \psi_1(k_j)} \norm{x} C ~~(\text{ since $\norm{T} <1$})\\
			& \leq \frac{1}{k_n} \sum_{j=0}^{k_n-1} \norm{b_j - \psi_1(j)} \norm{x} C \\
			& \leq \epsilon \norm{x} C (\text{ by Eq. \ref{conv in L1 cap M eq 1}}).
		\end{align*}
		Hence, an appeal to Lemma \ref{app by covg seq} implies that the sequence $\{M_n(\{b_j\}, \{d_j\}, x)\}$ converges a.u. Now since $\lim_{n \to \infty} \frac{k_n}{n} = 1$ and $A_n^{\mathbf{k}}(\{b_j\}, \{d_j\}, x)= \frac{k_n}{n} M_n(\{b_j\}, \{d_j\}, x)$ for all $n \in \N$, the result follows immediately.
	\end{proof}
	
	\begin{cor}\label{conv in L1 cap M cor}
		Let $\{b_j\}_{j \in \N}$ be a $U_f$-besicovitch sequence and $\{k_j\}_{j \in \N}$ has  density $1$. Let $x \in L^1 \cap M$. Then the averages 
		\begin{align*}
			\frac{1}{n} \sum_{j=0}^{n-1}  T^{k_j} (b_{k_j} x) ~~ \text{ and } ~~ \frac{1}{n} \sum_{j=0}^{n-1}  T^{k_j} (x b_{k_j})
		\end{align*}
		converges a.u.
	\end{cor}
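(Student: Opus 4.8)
The plan is to obtain both convergence statements as immediate specializations of Theorem \ref{conv in L1 cap M thm}; the only preliminary point is to exhibit a convenient bounded $U_f$-besicovitch sequence to occupy the slot that is not used. To that end, I would first record that the constant sequence $\mathbf{1} := \{1\}_{j \in \N}$, each term being the identity of $M$, is a bounded $U_f$-besicovitch sequence. Indeed, the identity $1 \in U(M)$ has spectrum $\{1\}$, which is finite, so $1 \in U_f$; the trigonometric polynomial $\psi(k) = 1 = 1 \cdot 1^k$ over $U_f$ then satisfies $1 - \psi(j) = 0$ for every $j$, so the Besicovitch $\limsup$ condition of Definition \ref{besicovitch seq} holds trivially (with $\epsilon = 0$), and $\sup_j \norm{1}_\infty = 1 < \infty$ gives boundedness.

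With this observation in hand, the first family is handled by applying Theorem \ref{conv in L1 cap M thm} to the given $U_f$-besicovitch sequence $\{b_j\}$ together with $\{d_j\} = \mathbf{1}$. Since $\mathbf{1}$ is bounded, the hypothesis that at least one of the two sequences be bounded is satisfied, and since $\{k_j\}$ has density $1$ and is strictly increasing, the theorem yields a.u.\ convergence of
\[
	A_n^{\mathbf{k}}(\{b_j\}, \{1\}, x) = \frac{1}{n} \sum_{j=0}^{n-1} T^{k_j}(b_{k_j}\, x \cdot 1) = \frac{1}{n} \sum_{j=0}^{n-1} T^{k_j}(b_{k_j}\, x)
\]
for all $x \in L^1 \cap M$, which is exactly the first average.

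Symmetrically, for the second family I would take $\{b_j\} = \mathbf{1}$ and let $\{d_j\}$ be the given besicovitch sequence. Again at least one of the two sequences (namely $\mathbf{1}$) is bounded, so Theorem \ref{conv in L1 cap M thm} applies and gives a.u.\ convergence of
\[
	A_n^{\mathbf{k}}(\{1\}, \{b_j\}, x) = \frac{1}{n} \sum_{j=0}^{n-1} T^{k_j}(1 \cdot x \cdot b_{k_j}) = \frac{1}{n} \sum_{j=0}^{n-1} T^{k_j}(x\, b_{k_j}),
\]
as required. There is essentially no obstacle here: the entire content is the reduction to Theorem \ref{conv in L1 cap M thm}, and the only step requiring any verification is that the constant identity sequence qualifies as a bounded $U_f$-besicovitch sequence, which is immediate from the definitions.
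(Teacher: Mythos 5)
Your proposal is correct and matches the paper's intended argument: the corollary is stated without proof precisely because it is the specialization of Theorem \ref{conv in L1 cap M thm} obtained by inserting the constant identity sequence into the unused slot, and your verification that $\mathbf{1}$ is a bounded $U_f$-besicovitch sequence (via $1 \in U_f$ and the trigonometric polynomial $\psi(k)=1\cdot 1^k$) is exactly the small check needed to make that legitimate.
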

	
	As a consequence we obtain the individual ergodic theorem for vector valued Besicovitch weight along a sequence of density one.
	
	\begin{thm}\label{main thm}
		Assume that the Orlicz function $\Phi$ satisfies $\Delta_2$ condition. Let $\mathbf{k}:=\seq{k_j}$ be a sequence of density $1$ and $\{b_j\}_{j \in \N}$ be a bounded $U_f$-besicovitch sequence in $\mathcal{Z}(M)$. Then for every $x \in L^{\Phi}$ the sequence $\{A_n^{\mathbf{k}}(\{b_j\}, x)\}$ converges b.a.u to some $\hat{x} \in L^{\Phi}$.
	\end{thm}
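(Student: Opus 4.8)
The plan is to obtain $\hat x$ as the b.a.u.\ limit on all of $L^\Phi$ by a standard density-plus-closedness argument: I will show that the set of $x$ for which the averages converge b.a.u.\ is closed and contains a dense subspace, and then separately verify that the limit actually lands in $L^\Phi$.

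First I would observe that a sequence $\mathbf{k}$ of density $1$ has lower density $1>0$, so the hypotheses of Proposition \ref{bau-closed} are met by the bounded sequence $\{b_j\}\subset\mathcal{Z}(M)$ and by $\mathbf{k}$. Hence the set
\[
\s^{\{b_j\},\mathbf{k}}=\set{x\in L^\Phi:\seq{A_n^{\mathbf{k}}(\{b_j\},x)}\text{ converges b.a.u}}
\]
is closed in $L^\Phi$. Next, Corollary \ref{conv in L1 cap M cor} shows that for every $x\in L^1\cap M$ the averages $A_n^{\mathbf{k}}(\{b_j\},x)=\frac1n\sum_{j=0}^{n-1}T^{k_j}(b_{k_j}x)$ converge a.u.; since a.u.\ convergence forces b.a.u.\ convergence (because $\norm{e(x_n-x)e}_\infty\leq\norm{(x_n-x)e}_\infty$ as $\norm{e}\leq 1$), we get $L^1\cap M\subseteq\s^{\{b_j\},\mathbf{k}}$. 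Because $\Phi$ satisfies the $\Delta_2$ condition, Proposition \ref{dense subsp of Orlicz sp} guarantees that $L^1\cap M$ is dense in $L^\Phi$. A closed set containing a dense subspace is the whole space, so $\s^{\{b_j\},\mathbf{k}}=L^\Phi$ and the averages converge b.a.u.\ for every $x\in L^\Phi$.

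It remains to check that the limit $\hat x$ lies in $L^\Phi$ rather than merely in $L^0$, and this is the step I expect to require the most care. The averages are uniformly bounded in $L^\Phi$: writing $C=\sup_j\norm{b_j}_\infty$ and using Proposition \ref{orlicz norm prop}$(i)$ (with the weight on the left and the identity on the right) together with $\norm{T}\leq 1$ on $L^\Phi$, one gets
\[
\norm{A_n^{\mathbf{k}}(\{b_j\},x)}_\Phi\leq\frac1n\sum_{j=0}^{n-1}\norm{b_{k_j}}_\infty\norm{x}_\Phi\leq C\norm{x}_\Phi
\]
for all $n$. Since b.a.u.\ convergence implies convergence in the measure topology (given $\epsilon,\delta$, the projection from the definition of b.a.u.\ convergence places $A_n^{\mathbf{k}}(\{b_j\},x)-\hat x$ in $\mathcal{N}'(\epsilon,\delta)$ for large $n$), the limit $\hat x$ is a measure-topology limit of a sequence lying in the ball of radius $C\norm{x}_\Phi$ of $L^\Phi$. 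By Remark \ref{unit ball closed} this ball is closed in the measure topology, being a consequence of the Fatou property established in Proposition \ref{fatou prop of orlicz}; whence $\hat x\in L^\Phi$ with $\norm{\hat x}_\Phi\leq C\norm{x}_\Phi$. The only genuinely nontrivial input beyond bookkeeping is this passage from a measure-topology limit back into $L^\Phi$, which rests on the Fatou property together with the uniform $L^\Phi$-bound on the averages.
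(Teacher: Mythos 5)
Your proposal is correct and follows essentially the same route as the paper's own proof: closedness of $\s^{\{b_j\},\mathbf{k}}$ via Proposition \ref{bau-closed}, density of $L^1\cap M$ (where Corollary \ref{conv in L1 cap M cor} gives a.u.\ hence b.a.u.\ convergence), and then the uniform bound $\norm{A_n^{\mathbf{k}}(\{b_j\},x)}_\Phi\leq C\norm{x}_\Phi$ combined with Remark \ref{unit ball closed} to place $\hat{x}$ in $L^\Phi$. If anything, you are more explicit than the paper in spelling out why $L^1\cap M\subseteq\s^{\{b_j\},\mathbf{k}}$ and why b.a.u.\ convergence yields convergence in measure.
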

	
	\begin{proof}
		Define, $\s^{\{b_j\}, \mathbf{k}}:= \set{x \in L^{\Phi} : \seq{A^{\mathbf{k}}_n(\{b_j\},x)} \text{ converges } b.a.u}$. Note that, by Proposition \ref{bau-closed} the set $\s^{\{b_j\}, \mathbf{k}}$ is closed in $L^{\Phi}$. Since $L^1 \cap M$ is dense in $L^{\Phi}$, we have $\s^{\{b_j\}, \mathbf{k}}= L^{\Phi} $.
		
		Let $x \in L^{\Phi}$. Then by Proposition \ref{orlicz norm prop} $\{A^{\mathbf{k}}_n(\{b_j\},x)\}_{n \in \N} \subset L^\Phi$. Also there exists $\hat{x} \in L^0$ such that $A^{\mathbf{k}}_n(\{b_j\},x)$ converges b.a.u. to $\hat{x}$, hence in measure. Now since $\norm{T}\leq 1$, we observe that for all $n \in \N$,
		\begin{align*}
			\norm{A^{\mathbf{k}}_n(\{b_j\},x)}_{\Phi} \leq \frac{1}{n} \sum_{j=0}^{n-1} \norm{b_{k_j}} \norm{ x}_{\Phi} \leq C \norm{x}_{\Phi},
		\end{align*}
		where $C= \sup_{j \in \N} \norm{b_j} < \infty$.
		Therefore, for all $n \in \N$, $A^{\mathbf{k}}_n(\{b_j\},x)$ belongs to the closed ball of $(L^{\Phi}, \norm{\cdot}_{\Phi})$ of radius $C \norm{x}_{\Phi}$. Consequently by Remark \ref{unit ball closed}, $\hat{x} \in L^{\Phi}$.
	\end{proof}
	
	\begin{rem}
		\begin{enumerate}
			\item Following Definition \ref{trig poly} and \ref{besicovitch seq} one can always define a scalar valued Besicovitch sequence. In particular, A scalar valued trigonometric polynomial is a function $P: \N \to \C$ satisfying 
			\begin{align*}
				P(k)= \sum_{j=1}^s r_j \lambda_j^k, ~~~ k \in \Z
			\end{align*} 
			for some $\{r_j\}_{j=1}^s \subset \C$ and $\{\lambda_j\}_{j=1}^s \subset \C^1$, where $\C^1:= \{z \in \C: \abs{z}=1 \}$. A sequence $\seq{\beta_j}_{j=1}^{\infty}$ of complex numbers is called a Besicovitch sequence if for all $\epsilon > 0$ there exists a trigonometric polynomial $P$ such that
			\begin{align*}
				\limsup_{n \to \infty} \frac{1}{n} \sum_{j=0}^{n-1} \abs{\beta_j- P(j)} < \epsilon.
			\end{align*} 
			The sequence $\seq{\beta_j}_{j=1}^{\infty}$ is bounded is $\sup_{j \in \N} \abs{\beta_j}< \infty$.
			\item Very recently in \cite[Corollary 3.2]{OBrien:2021tw}, the author proved the conclusion of Theorem \ref{main thm} when $x \in L^p~ (1\leq p< \infty)$ and also under the hypothesis that the Besicovitch weights are scalar valued. Hence our theorem generalises Corollary 3.2 of \cite{OBrien:2021tw}.
		\end{enumerate}
	\end{rem}


\newcommand{\etalchar}[1]{$^{#1}$}
\providecommand{\bysame}{\leavevmode\hbox to3em{\hrulefill}\thinspace}
\providecommand{\MR}{\relax\ifhmode\unskip\space\fi MR }
\providecommand{\MRhref}[2]{%
	\href{http://www.ams.org/mathscinet-getitem?mr=#1}{#2}
}
\providecommand{\href}[2]{#2}

\end{document}